\newif\ifdraft
\drafttrue

\documentclass[12pt]{amsart}

\usepackage[margin=1.3in,marginpar=1in]{geometry}
\usepackage{enumitem}
\usepackage{xcolor}
\usepackage{url}
\usepackage{bbm}
\usepackage{amsthm}
\usepackage{amsmath}
\usepackage{amssymb}
\usepackage{tikz}
\usepackage{comment}
\usepackage{subcaption}   
\usepackage{mathtools}
\usepackage{soul}

\ifdraft
\usepackage{fancyhdr,datetime2}
\fancypagestyle{firstpage}{%
	
	\fancyhf{}
	\fancyfoot[C]{\thepage}
}
\fi

\numberwithin{equation}{section}

\newcommand{\eps}{\varepsilon}

\theoremstyle{plain}

\newtheorem{theorem}{Theorem}[section]
\newtheorem{lemma}[theorem]{Lemma}
\newtheorem{proposition}[theorem]{Proposition}

\newtheorem{example}[theorem]{Example}

\newtheorem{question}[theorem]{Question}

\theoremstyle{remark}
\newtheorem{remark}{Remark}

\author{Gregory Hemenway and Jason Tu}
\title[Relative Variational Principle for IFS]{A Relative Variational Principle for Expanding Iterated Function Systems}
\date{\today}

\address{Dept.\ of Mathematics, The Ohio State University, Columbus, OH 43210}
\email{hemenway.math@gmail.com}
\email{tu.403@buckeyemail.osu.edu}

\thanks{
This material is based upon work supported by the National Science Foundation under Award No.\ DMS-2316687.
}

\DeclareMathOperator{\diam}{diam}

\newcommand{\allA}{\ref{ass:degree}--\ref{ass:exact}}
\newcommand{\NN}{\mathbbm{N}}
\newcommand{\RR}{\mathbbm{R}}
\newcommand{\sS}{\mathbbm{S}^1}
\newcommand{\ZZ}{\mathbbm{Z}}

\newcommand{\PP}{\mathcal{P}}
\newcommand{\LL}{\mathcal{L}}

\newcommand{\MM}{\mathcal{M}}

\newcommand{\one}{\mathbbm{1}}
\newcommand{\uomega}{{\underline{\omega}}}

\begin{document}

\begin{abstract}
The variational principle is a key tool in the study of invariant measures for chaotic dynamical systems. In recent times, dynamicists have developed techniques in random and nonstationary systems to better model real-world phenomena. Here, we prove a relative variational principle for a class of expanding iterated function systems. In particular, we use a nonstationary
Ruelle--Perron--Frobenius theorem to show that the marginal entropy given an ergodic invariant measure on the symbolic base equals the average topological entropy along fibers in the induced skew product.
\end{abstract}

\maketitle

\ifdraft
\thispagestyle{firstpage}
\pagestyle{fancy}
\fi

\section{Introduction}
\label{intro.sec}

Let $(X,d)$ be a compact metric space and $T\colon X\to X$ a continuous dynamical system. Denote by $\mathcal{M}(X,T)$ the space of $T$-invariant probability measures on $X$. For a continuous potential $\varphi\colon X\to\RR$, the classical variational principle relates the topological pressure of $T$\footnote{an exponential growth rate of the number of distinct orbits} to the free energy of invariant measures
\[
    P(\varphi)
    =
    \sup\Big\{h_\nu(T)+\int\varphi\, d\nu\colon \nu\in\mathcal{M}(X,T)\Big\}
\]
where $h_\nu(T)$ is the measure-theoretic entropy of $T$. An invariant probability measure which achieves this supremum is called an \emph{equilibrium state}. When $\varphi\equiv0$, the variational principle reduces to
\[
        h_{\text{top}}(T)=\sup\Big\{h_\nu(T) \colon\ \nu\in\mathcal{M}(X,T) \Big\}.
    \]
In this case, an equilibrium state is called a \emph{measure of maximal entropy}.

For stationary expanding systems, equilibrium states are often studied through transfer operators and Gibbs-type estimates. In particular, it was shown by Walters \cite{W78}, Bowen \cite{B74} that expanding maps equipped with H\"older potentials admit unique equilibrium states.
In \cite{gH23}, the first author studied equilibrium states for nonuniformly expanding skew products and used fiberwise transfer operators to construct conditional measures along fibers.

\begin{theorem}[\cite{gH23}, see \cite{DG99} for the uniformly expanding case]
\label{thm:gH23}
Let $X$ and $Y$ be two compact metric spaces. Consider a continuous, expanding skew product 
$$T\colon X\times Y\to X\times Y, \qquad T(x,y)=(fx, g_x y).$$
For a H\"older potential $\varphi\colon X\times Y\to\RR$
and any $x\in X$, the fiberwise transfer operator is defined
\begin{equation}\label{eqn:transOp}
    L_x\colon C(Y_x)\to C(Y_{fx}),\qquad
    (L_x\psi)(z)=\sum_{g_x(y)=z} e^{\varphi(x,y)}\,\psi(y),
    \qquad z\in Y_{fx}
\end{equation}
where $Y_x:=\{x\}\times Y$ denotes fibers of $X\times Y$.

Then the following hold.
\begin{enumerate}

    \item There exists a H\"older continuous potential $\Theta\colon X\to\RR$.

\item There exists a family of probability measures $\{\nu_x\}_{x\in X}$ such that
    \[
    \nu_x(Y_x)=1
    \qquad\text{and}\qquad
    L_x^*\nu_{fx}=\lambda_x\nu_x
    \]
    where $\lambda_x=e^{\Theta(x)}$.

    \item The MME on $(X \times Y,T)$ is the measure $\mu
    = \int_X \mu_x\, d\mathbb{P}(x)$
    where $\mu_x$ is a normalization of $\nu_x$ and $\mathbb{P}$ is an equilibrium state for
    $\Theta$ on $(X,f)$.
\end{enumerate}
\end{theorem}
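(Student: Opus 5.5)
The plan follows the Denker--Gordin strategy: build the fiber measures from the fiberwise operators $L_x$ of \eqref{eqn:transOp}, extract the base potential $\Theta$ from their normalizing constants, and identify the MME by disintegration. Note that part (3) concerns the MME, so although the operators are written for a general H\"older $\varphi$, there I would take $\varphi\equiv 0$. Then $L_x\one(z)=\#g_x^{-1}(z)$, and $\lambda_x$ records the fiberwise growth rate of preimages.

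\textbf{Step 1: fiber measures and H\"older $\Theta$ (parts (1) and (2)).} I would fix a cone
\[
C_{a}=\bigl\{\psi>0:\ \psi(y)\le e^{a\,d(y,y')^\alpha}\psi(y')\ \text{for } y,y' \text{ close}\bigr\}
\]
of log-H\"older functions on the fibers. Fiberwise expansion plus H\"older $\varphi$ gives $L_x C_a\subset C_{\sigma a+c}$ with $\sigma<1$, so for $a$ large $L_x C_a\subset C_{a'}$ with $a'<a$. By Birkhoff's theorem each $L_x$ is then a strict contraction in the Hilbert projective metric, uniformly in $x$. Consider the compositions $L_{f^{-1}_{(n)}x}\cdots$ along backward branches, or dually the adjoint compositions
\[
L^*_{x}L^*_{fx}\cdots L^*_{f^{n-1}x}\,\delta ,
\]
normalized to probability. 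Uniform contraction makes these Cauchy in the dual projective metric, which yields $\nu_x$ with $L_x^*\nu_{fx}=\lambda_x\nu_x$ and $\lambda_x=\nu_{fx}(L_x\one)$. Setting $\Theta(x)=\log\lambda_x$, H\"older continuity in $x$ comes from two facts: $L_x$ depends H\"older-continuously on $x$ (because $g_x$ and $\varphi$ do), and the projective contraction damps perturbations geometrically. This gives the usual telescoping estimate $|\Theta(x)-\Theta(x')|\le C d(x,x')^\beta$, using expansion of $f$ to compare the forward orbits of $x$ and $x'$.

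\textbf{Step 2: invariant fiber densities.} The same cone argument applied forward gives positive H\"older functions $h_x$ with $L_x h_x=\lambda_x h_{fx}$, normalized by $\nu_x(h_x)=1$. Set $\mu_x=h_x\nu_x$. A direct computation from the two eigen-relations shows $(g_x)_*\mu_x=\mu_{fx}$, so $\mu=\int\mu_x\,d\mathbb P$ is $T$-invariant whenever $\mathbb P$ is $f$-invariant. I take $\mathbb P$ to be the equilibrium state of the H\"older potential $\Theta$ on the expanding base $(X,f)$, which exists and is unique by Bowen/Walters.

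\textbf{Step 3: maximality, the main obstacle.} Any $T$-invariant $m$ projects to some $f$-invariant $m_X$. The Abramov--Rokhlin formula gives
\[
h_m(T)=h_{m_X}(f)+h_m(T\mid X).
\]
With $\varphi=0$, the Gibbs property of $\nu_x$ gives
\[
h_m(T\mid X)\le\int\Theta\,dm_X ,
\]
with equality for $m=\mu$. Combining these,
\[
h_m(T)\le h_{m_X}(f)+\int\Theta\,dm_X\le P_f(\Theta)=h_\mu(T).
\]
Hence $P_f(\Theta)=h_{\mathrm{top}}(T)$ and $\mu$ is an MME, unique by uniqueness of $\mathbb P$ together with equality in the fiber inequality. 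The delicate point is the fiberwise inequality. I would prove it via the fiberwise Gibbs bound
\[
\nu_x(\text{$n$-cylinder at }y)\asymp e^{-S_n\Theta(x)},
\]
then apply a Jensen/Shannon--McMillan argument to the conditional measures of $m$ on $\bigvee_{i<n} T^{-i}\xi$, where $\xi$ is a generating partition subordinate to the fibers. The obstacle is uniformity of the Gibbs constants in $x$, which again relies on the uniform projective contraction from Step 1.
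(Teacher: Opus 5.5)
The paper gives no proof of this statement. It is quoted from \cite{gH23} and \cite{DG99}, so I am judging your outline on its own terms.

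\textbf{The gap is in Step 2.} A family $\{h_x\}_{x\in X}$ with $L_xh_x=\lambda_xh_{fx}$ for every $x$ needs an invertible base. You have placed yourself on an expanding base (you invoke Bowen--Walters for $\mathbb P$), and an expanding map of an infinite compact space is never injective.

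Running the cone argument ``forward'' requires a backward branch $\hat x=(x_{-n})_{n\ge0}$ with $x_0=x$ and $fx_{-n-1}=x_{-n}$. It produces $h_{\hat x}=\lim_n L^n_{x_{-n}}\one/\|L^n_{x_{-n}}\one\|$, and this limit depends on the branch. In general no family with uniformly bounded log-H\"older constants satisfies your relation. Iterating it gives $h_y\propto L^n_{x_{-n}}h_{x_{-n}}$ along every backward branch of $y$. Your own projective contraction then forces $h_y=h_{\hat x}$ for all branches at once. That holds only under special structure, e.g.\ $g_x$ independent of $x$, or $\varphi\equiv0$ with every $g_x$ a covering of constant degree, where $h\equiv1$.

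Consequently $(g_x)_*\mu_x=\mu_{fx}$ is unproved, and so is the $T$-invariance of the measure you feed into Step 3. Over a non-invertible base, invariance only needs the averaged relation $\mu_y=\int(g_x)_*\mu_x\,d\mathbb P^y(x)$, where $\mathbb P^y$ is the conditional of $\mathbb P$ on $f^{-1}(y)$. There are two ways to obtain it:
\begin{itemize}
\item Run Step 2 on the natural extension of $(X,f,\mathbb P)$ and set $\mu_x=\bigl(\int h_{\hat x}\,d\hat{\mathbb P}^x(\hat x)\bigr)\nu_x$.
\item Bypass $h$ altogether. For $\varphi\equiv0$ one has $(g^k_x)_*\nu_x=(\lambda^k_x)^{-1}(L^k_x\one)\,\nu_{f^kx}$, where $\lambda^k_x=\prod_{j<k}\lambda_{f^jx}$ and $C^{-1}\le L^k_x\one/\lambda^k_x\le C$. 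So any weak-$*$ limit of Ces\`aro averages of $T^k_*\int\nu_x\,d\mathbb P(x)$ is $T$-invariant, has marginal $\mathbb P$, and has fiber conditionals uniformly comparable to $\nu_x$. That is all Step 3 needs.
\end{itemize}
The paper's own setting has the invertible two-sided shift as base, which is why Theorem~\ref{thm:nonstationary-rpf} can index $h_{\sigma^n\uomega}$ by $n\in\ZZ$.

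\textbf{Smaller points.}
\begin{itemize}
\item In Step 1, a cone defined by a purely local log-H\"older condition has infinite projective diameter, so a single $L_x$ is not a strict contraction. You need a block $L^N_x$ together with fiberwise exactness (the analogue of \ref{ass:exact}) to bound the $\sup/\inf$ ratios uniformly.
\item For the H\"older continuity of $\Theta$, expansion of $f$ works against you. $\nu_{fx}$ depends on the whole forward orbit, and nearby forward orbits separate. The telescoping comparison therefore runs only for about $\log(1/d(x,x'))$ steps before you fall back on the a priori bound $\tau^n$. This still gives a H\"older estimate, with a smaller exponent.
\item The fiber inequality you flag as the main obstacle is the easy half. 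Theorem~\ref{thm:LW77} reduces it to a separated-set count like the one in Proposition~\ref{prop:top-upper}.
\item The real remaining work is uniqueness. ``Equality in the fiber inequality'' must become an identification of the conditionals of a maximizing $m$. That means using the equality case of your Jensen step to get absolute continuity with respect to $\nu_x$ with controlled densities, then an ergodicity argument.
\end{itemize}
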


\vskip10pt 
Ledrappier and Walters established a relative variational principle for factor maps. Theorem \ref{thm:LW77} compares the marginal free energy of an invariant measure to the marginal average of fiberwise pressure quantities. This point of view is particularly natural for skew products and random dynamical systems, where ergodic base dynamics control fiberwise motion.

\begin{theorem}[{\cite[Theorem 2.1]{LW77}}]
\label{thm:LW77}
    Let $X$ and $Y$ be compact metric spaces and $T\colon X\to X,\ S\colon Y\to Y,\ \pi\colon X\to Y$ be continuous maps such that $\pi$ is surjective and $\pi \circ T=S \circ \pi$.
    Let $\varphi\colon X\to\RR$ be continuous and $\nu\in\mathcal{M}(Y,S)$. Then
    \[\text{sup}\big\{h_\mu(T|S)+\mu(\varphi)\colon \  \mu\in \mathcal{M}(X,T),\ \mu\circ\pi^{-1}=\nu\big\}=\int_{Y} P(T,\varphi,\pi^{-1} y)\ d\nu(y)\] where $P(T,\varphi,\pi^{-1}y)$ denotes the topological pressure of $\varphi$ for $T$ on  $\pi^{-1}y$. 
\end{theorem}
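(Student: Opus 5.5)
We prove Theorem~\ref{thm:LW77} by establishing the two inequalities separately, after first reducing to the case where $\nu$ is ergodic. The reduction uses the ergodic decomposition of $\nu$. The right-hand side is affine in $\nu$, because $y\mapsto P(T,\varphi,\pi^{-1}y)$ is a bounded Borel function. For the left-hand side, $h_\mu(T|S)+\mu(\varphi)$ is affine in $\mu$, and every $\mu$ over $\nu$ disintegrates along the ergodic decomposition of $\nu$ into measures lying over the ergodic components. Measurable selection of near-optimal measures over each component then lets us integrate the ergodic case.

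\textbf{Upper bound ($\le$).} Fix $\mu\in\mathcal{M}(X,T)$ with $\mu\circ\pi^{-1}=\nu$, and a finite partition $\xi$ of $X$ with small diameter and $\mu$-null boundaries. Disintegrate $\mu=\int \mu_y\,d\nu(y)$, with $\mu_y$ supported on $\pi^{-1}y$. Then $h_\mu(T,\xi\,|\,\pi^{-1}\mathcal{B}_Y)$ is the limit of $\frac1n\int H_{\mu_y}(\xi^n)\,d\nu(y)$, where $\xi^n=\bigvee_{i<n}T^{-i}\xi$. For each $y$, the elementary inequality $\sum p_i(a_i-\log p_i)\le\log\sum e^{a_i}$ gives
\[
H_{\mu_y}(\xi^n)+\int S_n\varphi\,d\mu_y\le \log\sum_{A\in\xi^n,\,A\cap\pi^{-1}y\ne\emptyset}\exp\Big(\sup_A S_n\varphi\Big).
\]
Next we compare the right-hand sum with $(n,\eps)$-separated-set sums on $\pi^{-1}y$. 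As in Misiurewicz's proof of the variational principle, each element of $\xi^n$ meets only boundedly many $\eps$-Bowen balls, once $\eps$ is below a Lebesgue-number-type constant. We divide by $n$ and integrate in $y$. Fatou's lemma does not apply directly, because we need a $\limsup$ bound in the right direction. Instead we use invariance: $\int \mu_y(S_n\varphi)\,d\nu=n\mu(\varphi)$. We also use that $y\mapsto \frac1n\log(\text{fiber sum})$ is subadditive along $S$-orbits up to constants. Kingman's subadditive ergodic theorem then identifies the limit of the integrals with $\int P(T,\varphi,\pi^{-1}y)\,d\nu$. Taking $\eps\to0$ gives the bound.

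\textbf{Lower bound ($\ge$).} Let $\nu$ be ergodic. For $\nu$-a.e.\ $y$ and each $n$, choose a maximal $(n,\eps)$-separated set $E_n(y)\subset\pi^{-1}y$ nearly achieving the pressure sum, with the choice made measurable in $y$. Define
\[
\sigma_{n,y}=\frac{\sum_{x\in E_n(y)}e^{S_n\varphi(x)}\delta_x}{\sum_{x\in E_n(y)}e^{S_n\varphi(x)}},\qquad
\mu_n=\frac1n\sum_{i<n}\int T^i_*\sigma_{n,y}\,d\nu(y).
\]
Each $\mu_n$ projects to $\nu$, since $\pi T^i x=S^i y$ and $\nu$ is $S$-invariant. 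Any weak-$*$ limit $\mu$ is $T$-invariant and still projects to $\nu$. We then run the Misiurewicz argument conditionally: block the orbit into pieces of length $q$ and use $H_{\sigma_{n,y}}(\xi^n)+\sigma_{n,y}(S_n\varphi)=\log\sum_{E_n(y)}e^{S_n\varphi}$, which holds because distinct points of $E_n(y)$ lie in distinct elements of $\xi^n$ when $\operatorname{diam}\xi<\eps$. Conditional entropies are concave and subadditive, so after integrating in $y$ we obtain
\[
h_\mu(T|S)+\mu(\varphi)\ge\limsup_n\frac1n\int\log\sum_{E_n(y)}e^{S_n\varphi}\,d\nu(y)-o_\eps(1).
\]
Ergodicity and subadditivity again identify the right-hand side with $\int P(T,\varphi,\pi^{-1}y)\,d\nu$.

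The main obstacle is the lower bound. The separated sets must be chosen measurably in $y$; we would handle this by working with finite open covers and the Kuratowski--Ryll-Nardzewski selection theorem. More seriously, passing the weak-$*$ limit through the conditional entropy $h_\mu(T,\xi|\pi^{-1}\mathcal{B}_Y)$ is delicate, because conditional entropy given a non-finite $\sigma$-algebra is not weak-$*$ upper semicontinuous in general. We would first replace $\pi^{-1}\mathcal{B}_Y$ by $\pi^{-1}$ of a fine finite partition $\eta$ of $Y$ with $\nu$-null boundaries. We would then use the identity $h_\mu(T|S)=h_\mu(T\vee\pi^{-1}\eta)-h_\nu(S,\eta)$ and, for each fixed $\eta$, apply upper semicontinuity of the joint finite-partition entropies. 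Finally we let $\eta$ refine.
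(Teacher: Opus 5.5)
The paper does not prove this statement. It is quoted from Ledrappier and Walters only as background for Theorem~A, so there is no proof in the paper to compare yours with. On its own terms, your sketch is the natural relativization of Misiurewicz's proof, but the upper bound has a genuine gap.

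\textbf{The upper bound.} The counting step you state is not the one the argument needs. To dominate $\sum_{A\in\xi^n,\,A\cap\pi^{-1}y\neq\emptyset}\exp(\sup_A S_n\varphi)$ by a separated-set sum on $\pi^{-1}y$, you send each atom $A$ to a point of a maximal $(n,\delta)$-separated (hence spanning) set $E\subset\pi^{-1}y$ lying near a maximizer in $\overline{A\cap\pi^{-1}y}$. So you must bound how many atoms of $\xi^n$ a single Bowen ball $B_n(e,\delta)$ can meet, and that number is not bounded in $n$.

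A partition that is merely fine with null boundaries gives no usable bound. Misiurewicz's partition does: write $\xi=\{C_1,\dots,C_k\}$ and take $\beta=\{B_0,B_1,\dots,B_k\}$, where $B_i\subset C_i$ is compact with $H_\mu(\xi|\beta)<1$, $B_0$ is the remainder, and $2\delta<\min_{1\le i<j\le k}d(B_i,B_j)$. Then each $\delta$-ball meets at most two atoms, so a Bowen ball meets at most $2^n$ atoms of $\beta^n$. Choosing $\delta$ so that $\varphi$ oscillates by less than $1$ on $\delta$-balls, this yields only
$h_\mu(T,\beta\,|\,\pi^{-1}\mathcal{B}_Y)+\mu(\varphi)\le\int P(T,\varphi,\pi^{-1}y)\,d\nu+\log 2+1$.

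The missing idea is the power trick, which removes this error:
\begin{itemize}
\item Apply the estimate to $(T^m,S^m,S_m\varphi)$.
\item Use $h_\mu(T^m|S^m)=m\,h_\mu(T|S)$ and $P(T^m,S_m\varphi,\pi^{-1}y)=m\,P(T,\varphi,\pi^{-1}y)$.
\item Use $h_\mu(T,\xi|\cdot)\le h_\mu(T,\beta|\cdot)+H_\mu(\xi|\beta)$, then divide by $m$.
\end{itemize}
This works because the error $\log 2+2$ is uniform over all partitions.

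Your remark about Fatou is also backwards here. Since $\frac1n\log P_n(T,\varphi,\eps,\pi^{-1}y)\le\log N_\eps+\|\varphi\|_\infty$ uniformly, where $N_\eps$ is the size of a cover of $X$ by sets of diameter $\le\eps$, reverse Fatou gives exactly the direction you need. No subadditivity is required for the upper bound.

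\textbf{The lower bound.} The interchange problem you describe actually belongs here. You must show that $\lim_{\eps\to0}\limsup_n\frac1n\int\log P_n(T,\varphi,\eps,\pi^{-1}y)\,d\nu$ dominates $\int P(T,\varphi,\pi^{-1}y)\,d\nu$. Fatou fails because the $\limsup$ may be attained along different subsequences for different $y$.

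Subadditivity is the right remedy, but not in the form you state. Separated-set sums are not submultiplicative up to a constant at a fixed scale; one only gets $P_{n+m}(\eps,y)\le e^{(n+m)\omega}P_n(\eps/2,y)\,P_m(\eps/2,S^ny)$, where $\omega$ is the oscillation of $\varphi$ on $\eps/2$-balls, and the change of scale blocks Kingman. Apply Kingman instead to an open-cover quantity:
\[
p_n(\mathcal U,y)=\min\Big\{\sum_{V\in\beta}\sup_V e^{S_n\varphi}\colon \beta\subset\mathcal U^n \text{ finite and covering } \pi^{-1}y\Big\}.
\]
This quantity is exactly submultiplicative along $S$-orbits and upper semicontinuous in $y$. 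Up to a factor $e^{n\tau}$, with $\tau$ the oscillation of $\varphi$ on sets of diameter $\diam\mathcal U$, it is sandwiched between separated-set sums at comparable scales. Then let $\mathcal U$ refine.

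Two further simplifications:
\begin{itemize}
\item Kingman needs no ergodicity, so you can drop the ergodic-decomposition reduction and its measurable selection of near-optimal measures.
\item The weak-$*$ step is easier than you fear. For every $\mu'$ projecting to $\nu$, $H_{\mu'}(\xi^q|\pi^{-1}\mathcal{B}_Y)=\inf_\eta\big[H_{\mu'}(\xi^q\vee\pi^{-1}\eta)-H_\nu(\eta)\big]$, the infimum taken over finite partitions $\eta$ of $Y$ with $\nu$-null boundaries. Hence $\limsup_n H_{\mu_n}(\xi^q|\pi^{-1}\mathcal{B}_Y)\le H_\mu(\xi^q|\pi^{-1}\mathcal{B}_Y)$ directly when $\xi$ has $\mu$-null boundaries.
\end{itemize}
The identity you wrote for $h_\mu(T|S)$ with a fixed $\eta$ is not an identity. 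The quantity $h_\mu(T,\xi\vee\pi^{-1}\eta)-h_\nu(S,\eta)$ only bounds $h_\mu(T,\xi|\pi^{-1}\mathcal{B}_Y)$ from above, with equality only after taking the infimum over $\eta$.
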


This raises a natural question about the fiberwise pressure of these fiber measures.

\begin{question}
    Do the fiber measures for an expanding skew product satisfy a relative variational principle?
\end{question}

In this paper, we will consider iterated function systems which can be viewed as a factor of a skew product. We will also restrict our attention to the zero potential.
Thus our setting should be viewed as a model case in terms of entropy to gain insight on the pressure problem for skew products.

Let $X$ be a compact metric space. Consider a family of continuous surjective maps $\Phi=\{T_0,\ldots,T_{\ell-1}\}$ where $T_j\colon X\to X$ are all surjective. Note that $\Phi$ induces a full two-sided shift on $\ell$ symbols $(\Sigma,\sigma)$.
An \emph{iterated function system (IFS)} is the dynamics encoded on $X$ by an element $\uomega\in\Sigma$ via the sequence of maps
\[
    T_\uomega^n
    =
    T_{\omega_{n-1}}\circ\cdots\circ T_{\omega_0}.
\]

Fix an ergodic $\sigma$-invariant probability measure $\mathbbm{P}$ on $\Sigma$. By Rokhlin's disintegration \cite{Rokhlin},
a probability measure $m$ on $\Sigma\times X$ with base marginal $\mathbbm{P}$ admits a disintegration
\[
    m=\int_\Sigma  m_\uomega\,d\mathbbm{P}(\uomega).
\]
It is easy to show that the measure $m$ is $T$-invariant if and only if its conditionals $\{m_\uomega\}$ satisfy the equivariance relation
$(T_{\omega_0})_*m_\uomega
    =
    m_{\sigma\uomega}$
for $\mathbbm{P}$-almost every $\uomega$. Thus, in the skew-product setting, invariance is not expressed by requiring one single measure on $X$ to be invariant under every generator. Instead, the natural object is an equivariant family of conditional measures over the fixed base system $(\Sigma,\sigma,\mathbbm{P})$.\\

We will consider IFSs that are of expanding type. Our precise assumptions are stated in Section~\ref{sec:setting}. For now, this roughly means that the generators are uniformly expanding and satisfy a fiberwise topological exactness\footnote{A map $T\colon X\to X$ is topologically exact if for every $\eps>0$, there exists a $N\in\NN$ such that for all $x\in X$, $T^n(B(x,\eps))=X$.} condition.

\theoremstyle{plain}
\newtheorem{theo}{\textbf{Theorem}}[theorem]
\renewcommand*{\thetheo}{\textbf{\Alph{theo}}}

\begin{theo}
\label{thmA}
Let $(X,\Phi)$ be an IFS of expanding type, and let $\mathbbm{P}$ be an ergodic $\sigma$-invariant probability measure on $\Sigma$. Let
    $T(\uomega,x)=(\sigma\uomega,T_{\omega_0}x)$
be the associated to $(X,\Phi)$ skew product. Then
\[
    h_{\text{top}}(T)
    =
    \int_\Sigma \log\lambda_\uomega\,d\mathbbm{P}(\uomega)
    =
    \sup\Big\{
        h_m(T)
        \colon
        m\in\mathcal{M}(\Sigma\times X,T), \pi_*m=\mathbbm{P}
    \Big\}
\]
where $\lambda_\uomega$ is the eigenvalue for the nonstationary transfer operator in Theorem \ref{thm:gH23}.
\end{theo}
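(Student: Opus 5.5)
Throughout, following the abstract, I read $h_{\text{top}}(T)$ as the fiberwise (relative) topological entropy averaged over $\mathbbm{P}$, and $h_m(T)$ as the relative entropy $h_m(T\,|\,\sigma)$ of $m$ given the base factor $\pi\colon\Sigma\times X\to\Sigma$. With this reading, the second equality is essentially Theorem \ref{thm:LW77}. Apply it with $\varphi\equiv 0$, the factor map $\pi$, and $\nu=\mathbbm{P}$. It gives
\[
\sup\{h_m(T\,|\,\sigma)\colon \pi_*m=\mathbbm{P}\}=\int_\Sigma h_{\text{top}}(T,\pi^{-1}\uomega)\,d\mathbbm{P}(\uomega).
\]
So the real content is the fiberwise identity
\[
h_{\text{top}}(T,\pi^{-1}\uomega)=\lim_n \frac1n\log\lambda^n_\uomega
\qquad\text{for $\mathbbm{P}$-a.e.\ $\uomega$,}
\]
where $\lambda^n_\uomega=\prod_{k=0}^{n-1}\lambda_{\sigma^k\uomega}$. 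Once this identity holds, Birkhoff's ergodic theorem applied to $\log\lambda_\uomega$ gives $\int\log\lambda_\uomega\,d\mathbbm{P}$. This uses that $\log\lambda_\uomega=\Theta$ is continuous, hence integrable.

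For the fiberwise identity I will use the transfer operator with zero potential. Here $(L^n_\uomega\one)(z)$ counts the preimages of $z$ under $T^n_\uomega$. The eigen-relation $L_\uomega^*\nu_{\sigma\uomega}=\lambda_\uomega\nu_\uomega$ from Theorem \ref{thm:gH23} gives
\[
\int L^n_\uomega\one\,d\nu_{\sigma^n\uomega}=\lambda^n_\uomega .
\]
The key technical step is a uniform distortion estimate. I will show there is a constant $C\ge 1$, independent of $\uomega$, $n$ and $z$, with
\[
C^{-1}\lambda^n_\uomega\le (L^n_\uomega\one)(z)\le C\lambda^n_\uomega .
\]
I plan to get this from uniform expansion, which gives bounded distortion between preimage counts of nearby points, combined with fiberwise topological exactness. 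Exactness lets a bounded number of extra iterates spread any ball over all of $X$. Together these let me compare $L^n\one$ at any two points up to a uniform constant.

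With the distortion estimate, the upper and lower counts of separated and spanning sets follow from the branch structure. Fix $\eps$ below the injectivity/expansivity radius. The inverse branches of $T^n_\uomega$ over $\eps$-balls give an $(n,\eps)$-spanning set for the fiber dynamics of cardinality at most $\#\{\text{cover}\}\cdot\max_z L^n_\uomega\one(z)$. Expansion makes distinct preimages of a single point $(n,\eps)$-separated, so there is an $(n,\eps)$-separated set of cardinality at least $\min_z L^n_\uomega\one(z)$. Taking $\frac1n\log$ and letting $n\to\infty$ gives $h_{\text{top}}(T,\pi^{-1}\uomega)=\lim\frac1n\log\lambda^n_\uomega$.

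As an independent check of the supremum, and to show it is attained, I would also construct the equivariant family $m_\uomega=h_\uomega\nu_\uomega$. Here $h_\uomega$ is the normalized fixed density, so that $(T_{\omega_0})_*m_\uomega=m_{\sigma\uomega}$. Set $m=\int m_\uomega\,d\mathbbm{P}$. The distortion bound then gives a Gibbs property: each $n$-th order inverse-branch cylinder has $m_\uomega$-measure comparable to $(\lambda^n_\uomega)^{-1}$. The partition into injectivity domains is a relative generator, by expansion. A relative Shannon--McMillan--Breiman argument then gives $h_m(T\,|\,\sigma)=\int\log\lambda_\uomega\,d\mathbbm{P}$.

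I expect the main obstacle to be the uniform two-sided distortion bound on $L^n_\uomega\one$. It requires the exactness time and the expansion constants to be uniform over all $\uomega\in\Sigma$, not just over a full-measure set. If uniformity fails, I would first try to obtain tempered constants $C(\uomega)$ with $\frac1n\log C(\sigma^n\uomega)\to 0$ almost surely, which still suffices for the entropy limits.
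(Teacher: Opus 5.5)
Your proof is correct, but it is organized differently from the paper's. The paper does not use Ledrappier--Walters for the identity. It proves a weak-Gibbs estimate $\ell_\eps e^{-S_n\log\lambda(\uomega)}\le\nu_\uomega(B_n(x,\eps,\uomega))\le e^{-S_n\log\lambda(\uomega)}$, using $\nu_\uomega(B_n(x,\eps,\uomega))=(\lambda^n_\uomega)^{-1}\nu_{\sigma^n\uomega}(B(T^n_\uomega x,\eps))$ together with the exactness bound $\nu_{\sigma^n\uomega}(B(z,\eps))\ge D^{-N(\eps)}$. The upper half, fed into Zhu's random Brin--Katok formula, shows that the explicit measure $\mu=\int h_\uomega\nu_\uomega\,d\mathbb P$ has entropy at least $\int\log\lambda_\uomega\,d\mathbb P$. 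The lower half bounds maximal separated sets via disjoint Bowen balls, giving $h_{\mathrm{top}}(T)\le\int\log\lambda_\uomega\,d\mathbb P$. The chain is then closed by the easy inequality $\sup h_m\le h_{\mathrm{top}}(T)$, which the paper takes as known. You instead hand the entire variational statement to Theorem~\ref{thm:LW77} and compute the fiber entropy purely topologically from preimage counts, with $\lambda^n_\uomega=\int L^n_\uomega\one\,d\nu_{\sigma^n\uomega}$ as the only link to the eigendata. This buys you several things: no Brin--Katok formula, no eigenfunctions, a sharper every-$\uomega$ conclusion $h_{\mathrm{top}}(T,\uomega)=\limsup_n\frac1n\log\lambda^n_\uomega$ with uniform constants, and the transparent interpretation $\lambda^n_\uomega\asymp\#(T^n_\uomega)^{-1}(z)$. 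It also has costs. You rely on both halves of Ledrappier--Walters. You must identify the paper's fiber entropy $\sup_\xi\lim_n\frac1n\int H_{m_\uomega}(\xi^n_\uomega)\,d\mathbb P$ with their conditional entropy $h_m(T\,|\,\sigma)$; this is standard, but it should be stated. And you only exhibit a maximizing measure if you carry out the optional Gibbs/SMB step, whereas the paper's route gives $\mu$ as a maximizer automatically. Finally, the step you flag as the main obstacle is easy here. With zero potential, matching inverse branches shows $L^n_\uomega\one(z)=L^n_\uomega\one(z')$ whenever $d_X(z,z')<\delta/2$. Then \ref{ass:degree} and \ref{ass:exact} give $\max L^n_\uomega\one\le D^{N(\delta/2)}\min L^n_\uomega\one$ uniformly in $\uomega$ and $n$, which is the exact analogue of the paper's $\ell_\eps=D^{-N(\eps)}$, so no tempered constants are needed. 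Moreover, the lower bound on separated sets needs no distortion at all, since $\max_z L^n_\uomega\one(z)\ge\lambda^n_\uomega$.
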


Similar to \cite{LW77}, Theorem~\ref{thmA} shows that topological entropy of the skew product $(\Sigma\times X,T)$ is the average fiberwise entropy of $\Phi$ with respect to $\mathbb P$.\\

\vskip10pt \subsection{Related work and restrictions.}

Various relative variational principles have been proved for random dynamical systems; see Kifer \cite{K92}, Simmons--Urbanski \cite{S14}, and Stadlbauer, Suzuki, and Varandas \cite{SSV}. These results motivated the work in \cite{gH23} and \cite{CH25} and suggest that the constructed fiber measures should satisfy a relative variational principle.

Here, we restrict our attention in two ways:
\begin{enumerate}
    \item IFSs are only a specific realization of expanding skew products;
    \item we focus on the measure of maximal entropy case when $\varphi\equiv0$.
\end{enumerate}

These restrictions simplify the problem in an essential way. For a general potential, one must control both entropy and the Birkhoff sums of the potential, and the relevant quantity is pressure rather than entropy. In the zero-potential case, the fiberwise transfer operators count preimages, and their eigenvalues directly record the exponential growth rate of the fiberwise dynamics. This makes the measure of maximal entropy case the natural first step toward a more general pressure statement.\\

\noindent\textbf{Comparison with common invariant measures.}

One approach to the thermodynamic formalism of an iterated function system
$\Phi=\{T_0,\dots,T_{\ell-1}\}$ on $X$ seeks to find a single probability measure $\rho$
on $X$ invariant under every generator,
\[
  (T_j)_*\rho=\rho\qquad\text{for every }0\le j\le \ell-1 .
\]
We will denote by $\MM(X,\Phi)$ the space of probability measures which are invariant under every map in $\Phi$. Such measures $\rho$ exists whenever the generators commute, and its appeal is that it
produces an invariant product measure for the skew product
$T(\uomega,x)=(\sigma\uomega,T_{\omega_0}x)$ over the Bernoulli base
$(\Sigma,\sigma,\mathbb{P})$. Indeed, the product $m=\mathbb{P}\otimes\rho$ is
$F$-invariant if and only if $\rho$ is commonly invariant. In this case, the Abramov--Rohlin formula
\[
  h_m(T)=h_{\mathbb{P}}(\sigma)+h^{\mathrm{\sigma}}_m(T)
\]
reduces the fiber entropy $h^{\mathrm{f}}_m(T)$ to that of $\rho$ under the
generators, so that the entropy of $T$ can be computed explicitly (see \cite{AR62}).
However, this common-invariance requirement is very restrictive because it ignores the nonstationary nature of the skew product.

Under the right assumptions, the space $\MM(X,\Phi)$ is nonempty. However, the problem is it is far too small.
For $X=S^1$ with expanding generators
$$T_j\colon x\mapsto d_j x \pmod 1,$$
Lebesgue
measure is invariant under each $T_j$, so $\mathrm{Leb}\in \MM(X,\Phi).$
Yet in commuting higher-rank settings this is
essentially all it contains: when the generators are multiplicatively
independent (the $\times p,\times q$ situation), the rigidity conjectured by
Furstenberg \cite{Furstenberg1967}  (established by Rudolph \cite{Rudolph1990} and Johnson \cite{Johnson1992}) forces every atomless ergodic
measure of positive entropy invariant under the joint action to be Lebesgue.
Hence, once one restricts to atomless measures of positive entropy, the class collapses to a
single point,
\[
  \MM(X,\Phi)=\{\mathrm{Leb}\},
\]
and maximizing the entropy of measures in this class conveys no information about the fiberwise dynamics. The degeneration only worsens with rank, so the requirement is most
damaging precisely in the higher-rank case of greatest interest.

These difficulties stem entirely from insisting on one measure invariant under
all generators at once. Instead, we follow ideas from \cite{BC92}. The relative variational principle developed here
replaces common invariance with covariance along the base: instead of a fixed
$\rho$, we use the fibered family of measures $\{\nu_\uomega\}_{\uomega\in\Sigma}$ produced by
the fiberwise transfer operators in Theorem \ref{thm:gH23}.
No individual $\nu_\uomega$ need be invariant under any generator; the family
merely transforms covariantly along the orbits by
\[(T_{w_0})_*\mu_\uomega=\mu_{\sigma\uomega}.\]
Such families exist
in general, independently of any commutation hypothesis, and are insensitive to
the rigidity that collapses $\MM(X,\Phi)$.\\

\noindent\textbf{Organization of the paper.}

In Section~\ref{sec:setting}, we introduce the skew-product model for expanding IFSs, state our standing assumptions, define the fiberwise topological and metric entropy, and state the main theorem. We also discuss examples and recall the nonstationary Ruelle--Perron--Frobenius theorem used later in the proof. In Section~\ref{sec:wGibbs}, we prove the weak-Gibbs estimates for the fiberwise eigenmeasures for $L_\uomega$. In Section~\ref{sec:proof}, we use the local entropy formula to prove Theorem \ref{thmA}.

\section{Setting and Main Results}
\label{sec:setting}

\vskip10pt \subsection{Shift spaces}

Consider an alphabet $\mathcal{A}=\{0,\ldots,\ell-1\}$. Let $\Sigma=\mathcal{A}^\ZZ$ and $\Sigma^+=\mathcal{A}^{\NN\cup\{0\}}$, respectively, denote the space of bi-infinite and positively infinite sequences of elements of $\mathcal{A}$.
For any $\underline{x}, \underline{y} \in \Sigma$, we define their distance in the shift space as
\[
d(\underline{x}, \underline{y}) = 2^{-\min\{|k|\colon\ x_k \neq y_k\}}.
\]
This turns $\Sigma$ into a compact metric space.
The left shift map $\sigma\colon \Sigma\to\Sigma$ is defined by $$\sigma (\underline{x_0 x_1 \dots x_{n-1}})_k=\underline{x_1 \dots x_{n}}.$$

Finite collections of elements in $\mathcal{A}$ can be concatenated into words 
$\omega = \omega_0 \omega_1 \dots \omega_{n-1}.$
We call the set of all words that appear in some $x\in\Sigma$ the \emph{language} of $\Sigma$ (or $\Sigma^+$ respectively); i.e. 
$$\mathcal{L}(\Sigma) :=\bigcup_{n\ge 1}\mathcal{L}_n(\Sigma) =\{\omega_0\cdots\omega_{n-1}\in \mathcal{A}^n\}.$$
Given a word \( \omega_0 \omega_1 \dots \omega_{n-1} \) in the language, its corresponding {cylinder set} at position \( i \) is defined as:
\[
[\omega_0 \omega_1 \dots \omega_{n-1}]_i = \{ \underline{x} \in \Sigma \mid x_i = \omega_0, x_{i+1} = \omega_1, \dots, x_{i+n-1} = \omega_{n-1} \}
\]
That is, a cylinder at the $i^{th}$ position is the collection of all sequences \( \underline{x} \in \Sigma \) for which $\omega_0 \omega_1 \dots \omega_{n-1}$ appears in the $i^{th}$ position.

We say the shift $(\Sigma, \sigma)$ has {specification} if there exists $M \in \mathbb{N}$ such that for any $v,w\in\LL$, there is $u\in \LL_M$ such that $vuw\in\LL$.
Bowen's argument \cite{B74} shows that if $(\Sigma,\sigma)$ has specification, then there is a unique MME (also see \cite{gH_Masters}). Note that the unique MME for the full-shift is Bernoulli; i.e. assigns weights to cylinder sets according to a probability vector \(\Vec{\mathbf{p}} = (p_0, p_1, \ldots, p_{\ell-1})\) which satisfies \(p_j \ge 0\) and \(\sum_{j=0}^{\ell-1} p_j = 1\).

\vskip10pt \subsection{Expanding Iterated Function Systems}
Let $(X, d)$ be a compact metric space. 
Consider a collection of maps ${\Phi}=\{T_0,\ldots,T_{\ell-1}\}$
for which each $T_j\colon X\to X$ is a continuous, surjective map.
An \emph{iterated function system (IFS)} is a realization of a fiber in the skew product 
\[
T\colon \Sigma\times X \to \Sigma\times X,
\quad
T(\uomega,x)=(\sigma(\uomega),T_{\omega_0}(x)).
\]  
For every $\uomega\in\Sigma$, we define an orbit segment of length $n$ starting in $X_\uomega$ by
        \[
        T_{\uomega}^n=T_{\omega_{n-1}}\circ\ldots\circ T_{\omega_0}\colon X_\uomega\to X_{\sigma^n\uomega}
        \]
where $(\Sigma,\sigma)$ is the shift of $\ell$ symbols. We equip $\Sigma\times X$ with the $L^1$ metric; i.e. $d\bigl((\uomega,x), (\uomega',x')\bigr)
  = d_\Sigma(\uomega,\uomega') + d_X(x,x')$. 
We note that $d\bigl((\uomega,x), (\uomega,x')\bigr)
  = d_X(x,x')$.

For each $\uomega\in\Sigma$, we write $X_\uomega:=\{\uomega\}\times X$ for the fiber over $\uomega$. For $\epsilon>0$, the ball of radius $\epsilon$ centered at $x\in X_\uomega$ is
\[
    B(x,\epsilon;\uomega)
    :=
    \{y\in X_\uomega:d_X(x,y)<\epsilon\}.
\]
Under the identification $X_\uomega\simeq X$, this is simply the ordinary ball $B(x,\epsilon)\subset X$.
The image of this fiber ball under the fiber composition $T_\uomega^n$ is regarded as a subset of the target fiber $X_{\sigma^n\uomega}$.

We will assume that our IFS $\Phi=\{T_0,\ldots,T_{\ell-1}\}$ satisfies the following
standing assumptions.

\begin{enumerate}[label=\textup{(A\arabic*)}]
    \item\label{ass:degree} \textbf{Uniformly bounded degree:}
    There exists $D\in\mathbb N$ such that for every $\uomega\in\Sigma$ and every
    $z\in X_{\sigma\uomega}$,
    \[ \#\,(T_{\omega_0})^{-1}(z)\le D. \]

    \item\label{ass:expanding} \textbf{Uniformly expanding and locally onto:}
    There exist $\delta>0$ and $\rho\in(0,1)$ such that for every $\uomega\in\Sigma$
    and all $x,x'\in X_{\uomega}$ with $d_X(x,x')\le \delta$,
    \[ d_X(x,x')\le \rho\, d_X(T_{\omega_0}x,\,T_{\omega_0}x'). \]
    Moreover, $T_{\omega_0}\big(B(x,\delta;\uomega)\big)\supset
    B(T_{\omega_0}x,\delta;\sigma\uomega)$.

    \item\label{ass:exact} \textbf{Uniform fiberwise topological exactness:}
    For every $\epsilon>0$, there exists $N=N(\epsilon)\in\mathbb N$ such that for
    every $\uomega\in\Sigma$, every $x\in X_{\uomega}$, and every $n\ge N$,
    \[ T_{\uomega}^n\big(B(x,\epsilon;\uomega)\big)=X_{\sigma^n\uomega}. \]
\end{enumerate}
We say an IFS $\Phi$ is of \emph{expanding type} if it satisfies assumptions \allA. Note that these assumptions all hold uniformly in $\uomega\in\Sigma$.

\begin{example}[An Expanding IFS on the Circle]
\label{ex:ifs}
Let $\sS=\mathbb R/\mathbb Z$ and consider the IFS $\Phi=\{E_2,E_3\}$ where
\[
E_m:\sS\to \sS,\qquad E_m(x)=mx \mod 1,
\]
for $m\in\{2,3\}$.

We check that this IFS satisfies assumptions \allA.

For (A1), first note that each map $E_m$ has exactly $m$ preimages at every point $x\in X$. Since
$m\in\{2,3\}$, we may take $D=3$.

For (A2), choose $0<\delta<1/6$. If $d_{\sS}(x,y)<\delta$, then for
$m\in\{2,3\}$, we have
\[
d_{\sS}(E_m x,E_m y)
=
m\cdot d_{\sS}(x,y).
\]
So the maps are uniformly expanding with $\rho=1/2$. Since $E_m$ is a homeomorphism of $B(x,\delta)$ onto $B(E_m x,m\delta)$
\[
E_m(B(x,\delta))=B(E_m x,m\delta)\supset B(E_m x,\delta).
\]
Thus, both maps are locally onto.

For (A3), fix $\epsilon>0$. Write
\[
E_\uomega^n(x) =D_n(\uomega)x \mod 1,
\qquad
D_n(\uomega):=\prod_{i=0}^{n-1}\omega_i.
\]
Since $D_n(\uomega)\ge 2^n$, choose
$N=N(\epsilon)$ such that
$2^N(2\epsilon)>1.$
Then for every $n\ge N$, every $\uomega\in\Sigma$, and  $x\in \sS$,
the image
\[
E_\uomega^n(B(x,\epsilon))
=
E_{D_n(\uomega)}(B(x,\epsilon))
\]
covers all of $\sS$. Therefore the IFS is uniformly fiberwise topologically exact.
Hence, $E$ is of expanding type.
\end{example}

\vskip10pt \subsection{Entropy of an IFS}
\phantom{.}

\subsubsection{Topological Entropy}
In this section, we describe the concepts of entropy for IFSs that we will study.

Fix \( \uomega \in \Sigma \). For \( {x},{y} \in X_{\uomega}\), we define the {$n^{th}$-Bowen distance} on $X_\uomega$ as 
\[
d_{\uomega}^n ({x},{y}) = \max_{0 \leq k < n} d(T_{\uomega}^k x, T_{\uomega}^k y).
\]
We will call $B_n(x,\epsilon,\uomega) := \{(\uomega, y)\in X_\uomega:d_\uomega^n(x,y)<\epsilon\}$ the $n^{th}$-Bowen ball of radius $\eps$ centered at $x\in X_{\uomega}$.

We say a subset \( E \subset X_{\uomega} \) is {$(\uomega,n, \epsilon)$-separated} if for distinct \( x, y \in E \), $d_{\uomega}^n(x,y) > \epsilon$. Let \( {s}(\uomega,n, \epsilon) \) be the largest cardinality of an  {$(\uomega,n, \epsilon)$-separated} set in \( X_{\uomega} \). The quantities $s(\uomega,n,\epsilon)$
measure the number of orbit segments in the fiber $X_\uomega$ that are
distinguishable at scale $\epsilon$. Note that for any $\underline{x},\underline{y}\in [\omega_0 \omega_1 \dots \omega_{n-1}]$, we have $ {s}(\underline{x},n,\epsilon)= {s}(\underline{y},n,\epsilon)$. Hence, we denote this common value by $ {s}(\omega, n,\epsilon)$ where $\omega=\omega_0 \omega_1 \dots \omega_{n-1}$.

The fiberwise topological entropy along the itinerary $\uomega$ is
\[
h_{\mathrm{top}}(T,\uomega)
=
\lim_{\epsilon\to0}
\limsup_{n\to\infty}
\frac1n\log s(\uomega,n,\epsilon).
\]
The relative topological entropy of the skew product over the base measure
$\mathbb P$ is
\[
    h_{\mathrm{top}}(T)
    =
    \lim_{\epsilon\to0}
    \limsup_{n\to\infty}
    \frac1n\int_\Sigma \log s(\uomega,n,\epsilon)\,d \mathbb{P}(\uomega).
\]

\vskip10pt
Similarly, we say a set $F\subset X_{\uomega}$ is \emph{$(\uomega,n,\eps)$-spanning} if for every $y\in X_{\uomega}$ there is $z\in F$ with $d_{\uomega}^n(y,z)\le\eps$. Let $r(\uomega,n,\eps)$ be the least
cardinality of such a set. A \emph{$(\uomega,n,\eps)$-cover} is a cover of $X$ by sets $C$ such that $\diam T_{\uomega}^k(C)\le\eps$ for all $0\le k<n$. We let $N(\uomega,n,\eps)$ be the least cardinality of a $(\uomega,n,\eps)$-cover.
As with $s(\uomega,n,\eps)$, the counts $r(\uomega,n,\eps)$ and
$N(\uomega,n,\eps)$ depend only on the word $\omega_0\cdots\omega_{n-1}$, so we
write $r(\omega,n,\eps)$ and $N(\omega,n,\eps)$.

The three counts compare in the standard way:
\begin{lemma}
    For every $\uomega$, $n$, and $\eps>0$,
    \begin{equation}\label{eqn:sandwich}
      N(\uomega,n,2\eps)\ \le\ r(\uomega,n,\eps)\ \le\ s(\uomega,n,\eps)\ \le\ N(\uomega,n,\eps).
    \end{equation}
\end{lemma}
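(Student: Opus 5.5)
The plan is to prove the three inequalities in \eqref{eqn:sandwich} separately, following the classical argument for a single map (Walters' comparison of spanning, separated and cover counts). The only change is that $T^k$ is replaced by the fiber composition $T_\uomega^k$, and the usual Bowen metric by $d_\uomega^n$. Since $\uomega$ and $n$ stay fixed throughout, nothing about the nonstationarity matters: $d^n_\uomega$ is a genuine metric on $X_\uomega\simeq X$, continuous by continuity of the $T_j$, so everything reduces to metric-space combinatorics in $(X,d^n_\uomega)$. Finiteness of all three counts comes from compactness of $X$, for example by covering with finitely many open $d^n_\uomega$-balls.

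\emph{Middle inequality $r\le s$.} Take a maximal $(\uomega,n,\eps)$-separated set $E$. If some $y\in X_\uomega$ had $d^n_\uomega(y,z)>\eps$ for all $z\in E$, then $E\cup\{y\}$ would still be separated, contradicting maximality. So $E$ is $(\uomega,n,\eps)$-spanning. A separated set of the largest cardinality $s(\uomega,n,\eps)$ is maximal under inclusion, so $r(\uomega,n,\eps)\le s(\uomega,n,\eps)$.

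\emph{Right inequality $s\le N$.} Let $\mathcal C$ be a $(\uomega,n,\eps)$-cover and $E$ a separated set. Each $C\in\mathcal C$ satisfies $\diam T^k_\uomega(C)\le\eps$ for $k<n$, so any two points of $C$ are at $d^n_\uomega$-distance at most $\eps$. Since distinct points of $E$ are at distance $>\eps$, each $C$ contains at most one point of $E$. Hence $\#E\le\#\mathcal C$.

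\emph{Left inequality $N(2\eps)\le r(\eps)$.} Given a minimal spanning set $F$, use the closed Bowen balls $\overline B_n(z,\eps)=\{y: d^n_\uomega(y,z)\le\eps\}$, $z\in F$. These cover $X_\uomega$ by the spanning property. For $y,y'$ in such a ball and $k<n$, the triangle inequality gives $d(T^k_\uomega y,T^k_\uomega y')\le 2\eps$, so $\diam T^k_\uomega(\overline B_n(z,\eps))\le2\eps$. Thus these balls form a $(\uomega,n,2\eps)$-cover of cardinality $r(\uomega,n,\eps)$.

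\emph{Possible obstacle.} The only delicate point is the strict-versus-nonstrict conventions. Separation uses $>\eps$, spanning uses $\le\eps$, and covers use $\diam\le\eps$, and each step needs these to match. With the conventions as stated they line up: the maximality argument needs "not separated" to mean distance $\le\eps$, which is exactly spanning, and closed balls give diameter $\le 2\eps$ rather than $<2\eps$. If the definition of cover were intended with open sets, I would instead take open balls of radius slightly larger than $\eps$; in that case the inequality holds with $2\eps$ replaced by $2\eps+\eta$ for any $\eta>0$, which is harmless for the entropy limits.
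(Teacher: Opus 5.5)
Your proof is correct and follows the same three-step argument as the paper: the Bowen balls around a minimal spanning set give a $(\uomega,n,2\eps)$-cover, a maximal separated set is spanning, and pigeonhole against a cover gives $s\le N$. Your use of the closed balls $\{y: d^n_\uomega(y,z)\le\eps\}$ in the left inequality is actually slightly more careful than the paper. The paper uses the open balls $B_n(z,\eps,\uomega)$, and since spanning is defined with $\le\eps$, those open balls need not cover $X_\uomega$.
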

\begin{proof}
    Let $F$ be a minimal $(\uomega,n,\eps)$-spanning set. Then $X_{\uomega}\subset \bigcup_{z\in F} B_n(z,\eps,\uomega)$. That is, $F$ forms a 
    $(\uomega,n,2\eps)$-cover of $X_\uomega$ which gives $N(\uomega,n,2\eps)\le r(\uomega,n,\eps)$.
    
    Now consider a maximal $(\uomega,n,\eps)$-separated set $E\subset X_\uomega$. If there are $y\in X_\uomega\setminus E$ such that $d_\uomega^n(x,y)>\eps$ for all $x\in E$, then $E$ would not be maximal. Thus, $E$ is $(\uomega,n,\eps)$-spanning, which gives the middle inequality in \eqref{eqn:sandwich}.
    
    Finally, let $C$ be a $(\uomega,n,\eps)$-cover of $X_\uomega$. Suppose $|E|>|C|$. By the pigeonhole principle, there must be a set in $C$ which contains multiple elements of $E$. However, this contradicts $(\uomega,n,\eps)$-separation, which gives $s(\uomega,n,\eps)\le N(\uomega,n,\eps)$.
\end{proof}

In particular all three define the same entropy as $\eps\to0$.\\

The following lemma and proposition justify the passage from finite-time relative topological entropy to the $\mathbb P$-average of the pointwise fiberwise entropy. We first record the required form of the subadditive
ergodic theorem and then apply it to the covering numbers $N(\uomega,n,\eps)$.

\begin{lemma}[Subadditive ergodic theorem; cf.\ \cite{VO16}]\label{lem:kingman}
Let $\mathbb P$ be $\sigma$-invariant and let $f_n\colon\Sigma\to[0,\infty)$,
$n\ge1$, be measurable with $\int_\Sigma f_1\,d\mathbb P<\infty$ and subadditive
along the cocycle:
\begin{equation}\label{eqn:subadd}
  f_{n+m}(\uomega)\ \le\ f_n(\uomega)+f_m(\sigma^n\uomega)
  \qquad\text{for $\mathbb P$-a.e.\ }\uomega,\ \ \forall\,n,m\ge1 .
\end{equation}
Then there is a $\sigma$-invariant $f_\ast\in L^1(\mathbb P)$ with
$\frac1n f_n\to f_\ast$ both $\mathbb P$-a.e.\ and in $L^1(\mathbb P)$, and
\begin{equation}\label{eqn:interchange}
  \int_\Sigma f_\ast\,d\mathbb P
  =\lim_{n\to\infty}\frac1n\int_\Sigma f_n\,d\mathbb P
  =\inf_{n\ge1}\frac1n\int_\Sigma f_n\,d\mathbb P .
\end{equation}
If $\mathbb P$ is ergodic, then $f_\ast\equiv\int_\Sigma f_\ast\,d\mathbb P$ is
constant $\mathbb P$-a.e.
\end{lemma}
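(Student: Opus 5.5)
The plan is to reduce the statement to the classical Kingman subadditive ergodic theorem by passing to an auxiliary sequence that is genuinely subadditive, and then to transfer the conclusions back to $f_n$. Subadditivity is assumed only $\mathbb P$-almost everywhere. So the first step is to remove the exceptional set. Let $N_{n,m}$ be the null set where \eqref{eqn:subadd} fails for the pair $(n,m)$. Set
\[
  N=\bigcup_{k\ge0}\bigcup_{n,m}\sigma^{-k}N_{n,m}.
\]
This is a countable union of null sets by $\sigma$-invariance of $\mathbb P$. Moreover $\sigma^{-1}N\subset N$, so $G=\Sigma\setminus N$ is a full-measure set with $\sigma G\subset G$ up to the usual adjustment; replacing $G$ by $\bigcap_k\sigma^{-k}G$ gives a forward-invariant full-measure set. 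On $G$ the inequality holds everywhere, and we redefine $f_n=0$ off $G$ without changing any integrals or a.e.\ statements.

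Next I check integrability and the Fekete structure of the integrals. Iterating \eqref{eqn:subadd} gives
\[
  f_n\le\sum_{k=0}^{n-1}f_1\circ\sigma^k .
\]
Since $f_n\ge0$ and $f_1\in L^1$, each $f_n\in L^1$ with $\int f_n\le n\int f_1$. Write $a_n=\int f_n\,d\mathbb P$. Integrating \eqref{eqn:subadd} and using invariance of $\mathbb P$ gives $a_{n+m}\le a_n+a_m$. Fekete's lemma then yields
\[
  \lim_n \tfrac{a_n}{n}=\inf_n \tfrac{a_n}{n}=:a\in[0,\infty).
\]
This is the right-hand equality in \eqref{eqn:interchange}.

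The core step is the almost-everywhere convergence. I would invoke Kingman's theorem as stated in \cite{VO16} for the subadditive sequence $f_n$ on $(\Sigma,\sigma,\mathbb P)$. It gives a $\sigma$-invariant $f_\ast$ with $\frac1nf_n\to f_\ast$ a.e. and $\int f_\ast=\inf_n a_n/n$; finiteness of the constant follows from $f_n\ge0$ together with the bound $a\le\int f_1$. If a self-contained argument were wanted instead, I would follow the Avila–Bochi/Steele proof. Define $f^-=\liminf\frac1nf_n$, note that $f^-$ is invariant because $f_{n+1}\le f_1+f_n\circ\sigma$, and show $\int f^-\ge a$ by a stopping-time decomposition of orbit segments into blocks on which $f_k\le k(f^-+\eps)$. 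Fatou's lemma gives $\int\limsup\frac1nf_n\le a$ once the upper bound $\limsup\le f^-$ is obtained through the same block decomposition. I expect this covering/stopping-time step to be the main obstacle; since the lemma is cited from \cite{VO16}, I would simply quote it.

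Finally, I upgrade to $L^1$ convergence. The functions $g_n=\frac1n\sum_{k<n}f_1\circ\sigma^k$ dominate $\frac1nf_n$, are nonnegative, and converge in $L^1$ by Birkhoff's theorem, so the family $\{\frac1nf_n\}$ is uniformly integrable. Vitali's theorem then gives $\frac1nf_n\to f_\ast$ in $L^1$, and hence $\int f_\ast=\lim a_n/n$, completing \eqref{eqn:interchange}. Under ergodicity, the invariant function $f_\ast$ is a.e.\ constant, so it equals its integral.
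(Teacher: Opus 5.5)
Your proposal is correct and takes essentially the paper's approach: the paper omits the proof and simply invokes Kingman's subadditive ergodic theorem from \cite{VO16}. The details you add are the routine packaging the paper leaves implicit: removing the exceptional null set so that subadditivity holds everywhere, applying Fekete to $a_n=\int f_n\,d\mathbb P$, and upgrading to $L^1$ convergence by dominating $\frac1n f_n$ with the Birkhoff averages $\frac1n\sum_{k<n}f_1\circ\sigma^k$ and applying Vitali.
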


We omit the standard proof, which packages Kingman's subadditive ergodic theorem; see \cite{VO16}.

\begin{proposition}\label{prop:fiber-entropy}
Fix $\eps>0$. Then the following limit exists for $\mathbb P$-a.e.\ $\uomega\in\Sigma$ and is $\sigma$-invariant
\[
  h_{\mathrm{top}}(T,\uomega,\eps):=\lim_{n\to\infty}\frac1n\log N(\uomega,n,\eps).
\]
Consequently,
\[
  h_{\mathrm{top}}(T)
  =\lim_{\eps\to0}\ \lim_{n\to\infty}\frac1n\int_\Sigma\log N(\uomega,n,\eps)\,d\mathbb P(\uomega)
  =\int_\Sigma h_{\mathrm{top}}(T,\uomega)\,d\mathbb P(\uomega).
\]
Moreover, if $\mathbb P$ is ergodic, then 
$h_{\mathrm{top}}(T)=h_{\mathrm{top}}(T,\uomega)$
for $\mathbb P$-a.e.\ $\uomega$.
\end{proposition}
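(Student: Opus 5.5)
We apply Lemma~\ref{lem:kingman} to $f_n(\uomega):=\log N(\uomega,n,\eps)$ for fixed $\eps>0$.

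\emph{Step 1: $f_n$ is a measurable, integrable, subadditive cocycle.} As noted after the definition of covers, $N(\uomega,n,\eps)$ depends only on the word $\omega_0\cdots\omega_{n-1}$. Hence $f_n$ is constant on cylinders of length $n$; in particular it is measurable and bounded. Also $f_1\ge 0$ and $f_1<\infty$ by compactness of $X$, so $\int f_1\,d\mathbb P<\infty$.

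For subadditivity, let $\mathcal C$ be a minimal $(\uomega,n,\eps)$-cover and $\mathcal D$ a minimal $(\sigma^n\uomega,m,\eps)$-cover. Consider the sets $C\cap (T_\uomega^n)^{-1}(D)$ with $C\in\mathcal C$, $D\in\mathcal D$. They cover $X$. For $k<n$ the image $T_\uomega^k$ of such a set lies in $T^k_\uomega(C)$. For $n\le k<n+m$ we have $T_\uomega^k=T^{k-n}_{\sigma^n\uomega}\circ T^n_\uomega$, so the image lies in $T^{k-n}_{\sigma^n\uomega}(D)$. In both cases the diameter is at most $\eps$, so these sets form a $(\uomega,n+m,\eps)$-cover. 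Therefore
\[
N(\uomega,n+m,\eps)\le N(\uomega,n,\eps)\,N(\sigma^n\uomega,m,\eps),
\]
which is \eqref{eqn:subadd} after taking logarithms. Lemma~\ref{lem:kingman} then gives a $\sigma$-invariant limit $h_{\mathrm{top}}(T,\uomega,\eps)=\lim_n\frac1n f_n(\uomega)$ for $\mathbb P$-a.e.\ $\uomega$, together with
\[
\int h_{\mathrm{top}}(T,\uomega,\eps)\,d\mathbb P=\lim_n\frac1n\int f_n\,d\mathbb P .
\]
Under ergodicity this limit is a.e.\ constant.

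\emph{Step 2: replace $N$ by $s$ and pass to $\eps\to0$.} By \eqref{eqn:sandwich},
\[
\log N(\uomega,n,2\eps)\le\log s(\uomega,n,\eps)\le\log N(\uomega,n,\eps).
\]
Integrating and dividing by $n$, the $\limsup_n$ of $\frac1n\int\log s\,d\mathbb P$ is squeezed between the limits at scales $2\eps$ and $\eps$. Both $N$ and $s$ are monotone nonincreasing in $\eps$, so the limits as $\eps\to0$ exist (possibly $+\infty$) and coincide. This gives the first equality for $h_{\mathrm{top}}(T)$. The same sandwich applied pointwise shows $h_{\mathrm{top}}(T,\uomega)=\lim_{\eps\to0}h_{\mathrm{top}}(T,\uomega,\eps)$ a.e.

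\emph{Step 3: interchange $\lim_{\eps\to0}$ with $\int d\mathbb P$.} This is the main subtlety. As $\eps\downarrow0$, $h_{\mathrm{top}}(T,\uomega,\eps)$ is nondecreasing and nonnegative, so the monotone convergence theorem (along a sequence $\eps_k\downarrow0$) gives
\[
\lim_{\eps\to0}\int h_{\mathrm{top}}(T,\uomega,\eps)\,d\mathbb P=\int h_{\mathrm{top}}(T,\uomega)\,d\mathbb P .
\]
One may also note finiteness: by (A1), (A2) and a standard argument, each $N(\uomega,n,\eps)\le C_\eps D^n$, so all quantities are bounded by $\log D$. This is not strictly needed for monotone convergence, however.

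Finally, if $\mathbb P$ is ergodic, each $h_{\mathrm{top}}(T,\cdot,\eps)$ is a.e.\ constant. Taking a countable sequence $\eps_k\to0$, $h_{\mathrm{top}}(T,\cdot)$ is a.e.\ constant, equal to its integral $h_{\mathrm{top}}(T)$.
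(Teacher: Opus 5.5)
Your proposal is correct and follows essentially the same route as the paper. Both arguments apply Kingman's theorem to the submultiplicative covering counts $\log N(\uomega,n,\eps)$, use the sandwich \eqref{eqn:sandwich} to pass to separated sets, apply monotone convergence in $\eps$, and invoke ergodicity for a.e.\ constancy. Your extra remarks on measurability (constancy on cylinders) and on restricting to a countable sequence $\eps_k\downarrow 0$ make explicit details the paper leaves implicit.
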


\begin{proof}
Since $X$ is compact, it admits a finite cover by balls of radius $\eps/2$, and such balls have diameter at most $\eps$. Thus $N(\uomega,1,\eps)$ is bounded by a constant independent of $\uomega$, which implies $f_1\in L^1(\mathbb P)$.

Let $\mathcal{A}$ be a minimal $(\uomega,n,\eps)$-cover of $X_{\uomega}$ and $\mathcal{B}$ a minimal $(\sigma^n\uomega,m,\eps)$-cover of $X_{\sigma^n\uomega}$. Then $\mathcal{C}=\{A\cap(T_{\uomega}^n)^{-1}B\colon A\in\mathcal A,\ B\in\mathcal B\}$ covers $X_{\uomega}$ and each
$C=A\cap(T^n_\uomega)^{-1}B$ has $(\uomega,n+m,\eps)$-diameter $\le\eps$:
for $0\le k<n$ we have $T^k_\uomega C\subset T^k_\uomega A$, while for
$k=n+j$ with $0\le j<m$ we have $T^{n+j}_\uomega C\subset
T^j_{\sigma^n\uomega}B$.
Therefore, we have
\begin{align*}
    N(\uomega,n+m,\eps) \le\ |\mathcal{C}|
    \le\ |\mathcal{A}||\mathcal{B}|= N(\uomega,n,\eps)N(\sigma^n\uomega,m,\eps).
\end{align*}
Hence, the sequence $f_n(\uomega):=\log N(\uomega,n,\eps)$ satisfies \eqref{eqn:subadd}.

For fixed $\eps>0$, Lemma~\ref{lem:kingman} gives a $\mathbb{P}$-integrable, $\sigma$-invariant limiting function $h_{\mathrm{top}}(T,\uomega,\eps):=\lim_{n\to\infty}\frac1n\log N(\uomega,n,\eps)$ such that
\[
\int_\Sigma h_{\mathrm{top}}(T,\uomega,\eps)\,d\mathbb P(\uomega) =\lim_{n\to\infty}\frac1n\int_\Sigma\log N(\uomega,n,\eps)\,d\mathbb P(\uomega).
\]
As $\eps\downarrow0$, the counts $N(\uomega,n,\eps)$ increase, so $\eps\mapsto h_{\mathrm{top}}(T,\uomega,\eps)$ is monotone and converges almost everywhere by \eqref{eqn:sandwich}. By monotone convergence, the limit in $\eps$
passes through the integral giving
\begin{align*}
    \int_\Sigma h_{\mathrm{top}}(T,\uomega)\,d\mathbb P(\uomega)
    &=\lim_{\eps\to 0}\int_\Sigma h_{\mathrm{top}}(T,\uomega,\eps)\,d\mathbb P(\uomega)\\
    &=\lim_{\eps\to 0}\lim_{n\to\infty}\frac1n\int_\Sigma\log N(\uomega,n,\eps)\,d\mathbb P(\uomega)\\
    &=\lim_{\eps\to 0}\limsup_{n\to\infty} \frac1n\int_\Sigma\log s(\uomega,n,\eps)\,d\mathbb{P}(\uomega) =h_{\mathrm{top}}(T).
\end{align*}
Moreover, since $\mathbb{P}$ is ergodic, Lemma \ref{lem:kingman} implies that for almost every $\uomega$, we have
\begin{align*}
    h_{\mathrm{top}}(T,\uomega)=\lim_{\eps\to 0} h_{\mathrm{top}}(T,\uomega,\eps)
    &=\lim_{\eps\to 0}\limsup_{n\to\infty} \frac1n\int_\Sigma\log s(\uomega,n,\eps)\,d\mathbb{P} (\uomega)=h_{\mathrm{top}}(T).\qedhere
\end{align*}
\end{proof}

\begin{remark}
We apply
the lemma to the covering count $N(\uomega,n,\eps)$ rather than to $S(\uomega,n,\eps)$ because exact
subadditivity can fail for separated sets: an $(\uomega,n+m,\eps)$-separated
set need not be $(\uomega,n,\eps)$-separated, since two points may remain
$\eps$-close during the first $n$ iterates and separate only afterwards.
Repairing this by comparing maximal separated and spanning sets costs a factor
of $2$ in the scale $\eps$, which destroys the cocycle relation
\eqref{eqn:subadd} at fixed $\eps$. The covering count is exactly
submultiplicative, and \eqref{eqn:sandwich} transfers the conclusion to $s(\uomega,n,\eps)$
and $r(\uomega,n,\eps)$.
\end{remark}

\label{ex:topEntropy}
\begin{example}
Recall the IFS $E=\{E_2, E_3\}$ from Example \ref{ex:ifs}. We will compute the topological entropy of $E$.

As above, we write $D_n(\uomega)=\prod_{i=0}^{n-1}\omega_i$ for cardinality of the set $(T_\uomega^n)^{-1}(y)$.
For sufficiently small $\epsilon>0$, we claim that $s(\uomega,n,\epsilon)$ grows like $D_n(\uomega)$. More precisely, we will show that 
there exists a constant $C_\epsilon\ge 1$, independent of $n$ and $\uomega$, such
that
\begin{equation}
    \label{eqn:separated}
    D_n(\uomega) \le s(\uomega,n,\epsilon) \le C_\epsilon D_n(\uomega).
\end{equation}
The lower bound is immediate since for $\epsilon>0$ sufficiently
small, distinct inverse branches are $(\uomega,n,\epsilon)$-separated.

For the upper bound, first note that \eqref{eqn:sandwich} implies
$s(\uomega,n,\eps)\ \le\ r(\uomega,n,\eps/2)$. We proceed by estimating the cardinality of $(\uomega,n,\eps/2)$-spanning sets.
Let $\mathcal{C}$ be a $\eps/2$-cover of $X_{\sigma^n\uomega}$ of minimal cardinality $C_\eps$. Note that the compactness of $X$ implies $C_\eps$ is finite
and is independent of $n$ and $\uomega$. 
Since $\eps<\delta$, we know that $E_{\uomega}^n$ is locally injective on $X_\uomega$. Thus, each $B\in\mathcal{C}$ has $D_n(\uomega)$ distinct preimage balls which are $(\uomega,n,\eps/2)$-Bowen balls since each $E_j$ is expanding.
Choosing one point from each set $(E_\uomega^n)^{-1}B$ for $B\in\mathcal{C}$ therefore produces a
$(\uomega,n,\eps/2)$-spanning set of cardinality 
$D_n(\uomega)\cdot C_\eps$. Hence
\[
  s(\uomega,n,\eps) \le r(\uomega,n,\eps/2)\le C_\eps\,D_n(\uomega).
\]

By \eqref{eqn:separated}, for every sufficiently small $\epsilon>0$ and all $n\in\NN$,
\[
  \frac1n\log D_n(\uomega)
  \le \frac1n\log s(\uomega,n,\epsilon)
  \le \frac1n\log D_n(\uomega)+\frac1n{\log C_\epsilon}.
\]
Since $C_\epsilon$ is independent of $n$, letting $n\to\infty$ gives
\[
  \limsup_{n\to\infty} \frac1n\log s(\uomega,n,\epsilon)
  =\limsup_{n\to\infty} \frac1n\log D_n(\uomega).
\]
The right-hand side does not depend on $\epsilon$, so
\[
  h_{\mathrm{top}}(T,\uomega)
  =\limsup_{n\to\infty}\frac1n\log D_n(\uomega)
  =\limsup_{n\to\infty}\frac1n\sum_{i=0}^{n-1}\log\omega_i .
\]
If $\mathbb P$ is ergodic, Birkhoff's ergodic theorem gives, for
$\mathbb P$-a.e.\ $\uomega$,
\[
  h_{\mathrm{top}}(T,\uomega)
  =\lim_{n\to\infty}\frac1n\sum_{i=0}^{n-1}\log\omega_i
  =\int_\Sigma \log\omega_0\,d\mathbb P(\uomega),
\]
in agreement with Proposition~\ref{prop:fiber-entropy}. In particular, if
$\mathbb P$ is the uniform Bernoulli measure on $\{2,3\}^{\mathbb N}$, then
\[
h_{\mathrm{top}}(T)
=\int_\Sigma \log \omega_0\,d\mathbb P(\uomega)
=\frac12\log2+\frac12\log3 =\log\sqrt6.
\]
\end{example}

\subsubsection{Metric Entropy}
\label{sec:metricEntropy}
\phantom{.}

Fix a \(\sigma\)-invariant probability measure \(\mathbb P\) on \(\Sigma\). Let $\pi: \Sigma \times X \to \Sigma$ a projection map $\pi_\Sigma(\uomega,x)=\uomega$ to the base. We denote the set of \(T\)-invariant probability measures with marginal \(\mathbb P\) by
\[
\mathcal M_{\mathbb P}(\Sigma\times X,T)
=
\left\{
\mu\in\mathcal M(\Sigma\times X,T):
\pi_{*}\mu=\mathbb P
\right\}.
\]

By Rokhlin's disintegration, each \(\mu\in\mathcal M_{\mathbb P}(\Sigma\times X,T)\) has a unique   disintegration into conditional measures $\{\mu_\uomega\}$ over \(\mathbb P\)-a.e. fibers $X_\uomega$
\[
\mu=\int_\Sigma \mu_\uomega\,d\mathbb P(\uomega).
\]
From this, it is easy to see that
\(T\)-invariance of \(\mu\) is equivalent to the fiberwise pseudo-invariance relation
\[
(T_{\omega_0})_*\mu_\uomega=\mu_{\sigma\uomega}\quad \text{ for } \mathbb P\text{-a.e. } \uomega.
\]

Let \(\xi\) be a finite measurable partition of \(X\) and $\nu\in\MM(X)$. As in \cite{BC92}, we define the (Shannon) entropy of $\xi$ as
\[
  H_{\nu}(\xi):=-\sum_{A\in\xi}\nu(A)\log\nu(A).
\]
Fix \(\mu\in\mathcal M_{\mathbb P}(\Sigma\times X,T)\). For \(\uomega\in\Sigma\), define $\xi_\uomega^n = \bigvee_{k=0}^{n-1}(T_\uomega^k)^{-1}\xi.$ The relative entropy of \(T\) with respect to the partition \(\xi\) is
\[
h_\mu(T,\xi)
=
\lim_{n\to\infty}
\frac1n
\int_\Sigma
H_{\mu_\uomega}\left(\xi_\uomega^n\right)
\,d\mathbb P(\uomega)
\]
In particular, $H_{\mu_\uomega}(\xi_\uomega^n)$ is the Shannon entropy of $\xi^n_\uomega$ with respect to the conditional $\mu_\uomega$. Thus, $h_\mu(T,\xi)$ is the average entropy along fibers. The metric entropy with respect to $\PP$ is $h_\mu(T)=\sup\{h_\mu(T,\xi)\colon \xi \text{ is a finite partition of $X$}\}$.\\

We use the following random Brin--Katok type entropy formula from Zhu~\cite{Zhu09}.

\begin{proposition}[Brin-Katok entropy formula \cite{Zhu09}]
\label{prop:randBrinKatok}
    Let $m\in\mathcal{M}(\Sigma\times X,T)$ with $h_m(T)<\infty$. If $\{m_\uomega\}$ is a disintegration of $m$ along the fibers $X_\uomega$, then for $\mathbb{P} $-a.e. $\uomega \in \Sigma$ and $m_\uomega$ -a.e. $x\in X$,$$h_m(T)=\lim_{\eps\to 0}\limsup_{n\to\infty} -\frac1n\log m_\uomega\big(B_n(x,\eps,\uomega)\big).$$  
\end{proposition}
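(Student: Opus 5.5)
Since Proposition~\ref{prop:randBrinKatok} is quoted from \cite{Zhu09}, I would recover it by running the classical Brin--Katok argument fiber by fiber, with the random Shannon--McMillan--Breiman theorem in place of the deterministic one. I would first assume $m$ is ergodic, which is the case relevant to Theorem~\ref{thmA} since $\mathbb P$ is ergodic; the general case should follow by ergodic decomposition. Write
\[
\underline h(\uomega,x,\eps)=\liminf_{n\to\infty}-\tfrac1n\log m_\uomega\big(B_n(x,\eps,\uomega)\big)
\]
and let $\overline h(\uomega,x,\eps)$ be the corresponding $\limsup$. Both are monotone in $\eps$. Using $T^{-1}$-compatibility of the Bowen balls and $(T_{\omega_0})_*m_\uomega=m_{\sigma\uomega}$, both should be $T$-invariant, hence $m$-a.e.\ constant. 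The key input is the random SMB theorem: for a finite partition $\xi$ of $X$ and $m$-a.e.\ $(\uomega,x)$,
\[
-\tfrac1n\log m_\uomega\big(\xi^n_\uomega(x)\big)\to h_m(T,\xi).
\]
I would prove it by transferring the classical SMB for $T$ and the partition $\{\Sigma\}\times\xi$ relative to the base $\sigma$-algebra, or simply cite Bogensch\"utz/Kifer.

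\emph{Upper bound, $\overline h\le h_m(T)$.} Fix $\eps$ and choose a finite partition $\xi$ of $X$ with $\diam\xi<\eps$ and $m(U_\delta)<\gamma$, where $U_\delta=\Sigma\times\{x: d(x,\partial\xi)<\delta\}$. Such a $\xi$ exists by choosing radii of balls whose spheres are null for the marginal of $m$ on $X$. If $T^k_\uomega x\notin U_\delta$, then the $\delta$-ball around $T^k_\uomega x$ lies in one atom. So the Bowen ball $B_n(x,\delta,\uomega)$ is contained in the union of those atoms of $\xi^n_\uomega$ that agree with $\xi^n_\uomega(x)$ except at the times $k$ with $T^k(\uomega,x)\in U_\delta$. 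By Birkhoff's theorem for $T$, there are at most $2\gamma n$ such times. Hence at most $\binom{n}{2\gamma n}(\#\xi)^{2\gamma n}=e^{n\beta(\gamma)}$ atoms are involved, with $\beta(\gamma)\to0$ as $\gamma\to0$. I would then use SMB together with a Borel--Cantelli argument: the set of atoms with $m_\uomega$-mass below $e^{-n(h+2\beta)}$ has small total mass. This gives, for $m_\uomega$-a.e.\ $x$,
\[
m_\uomega\big(B_n(x,\delta,\uomega)\big)\ge e^{-n(h_m(T,\xi)+3\beta)}
\]
eventually. Letting $\gamma\to0$ and $\delta\to0$ yields $\lim_{\eps}\overline h\le h_m(T)$.

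\emph{Lower bound, $\underline h\ge h_m(T)$.} Fix a partition $\xi$ with $h_m(T,\xi)>h_m(T)-\gamma$; the case $h_m=\infty$ is excluded by hypothesis. Choose $\eps$ smaller than the Lebesgue-type constant of $\xi$ away from $U_\delta$. Then $B_n(x,\eps,\uomega)\subset\xi^n_\uomega(x)$, up to the $2\gamma n$ exceptional times. Now $B_n(x,\eps,\uomega)$ meets at most $e^{n\beta}$ atoms of $\xi^n_\uomega$, each of mass $\le e^{-n(h_m(T,\xi)-\gamma)}$ by SMB. This gives $m_\uomega(B_n)\le e^{-n(h_m(T)-2\gamma-\beta)}$ for typical $x$.

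I expect the main obstacle to be making the lower bound hold on a full-measure set rather than merely for most $x$ at each fixed $n$. The bound applies only to points whose nearby atoms are all SMB-typical, and the typical set depends on $\uomega$ and $n$. I would handle this as Brin--Katok do: define the bad sets $G_n(\uomega)$ of atoms violating the SMB bound, show that $\sum_n m(\text{points whose Bowen ball meets }G_n)<\infty$ using the $e^{n\beta}$ multiplicity bound, and integrate over $\mathbb P$. Borel--Cantelli applied to $m$ on the skew product then gives the fiberwise a.e.\ statement. Measurability of $\uomega\mapsto m_\uomega(B_n(x,\eps,\uomega))$ is routine from the Rokhlin disintegration.
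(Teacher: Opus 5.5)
The paper gives no proof here and only cites \cite{Zhu09}. Your overall strategy, a fiberwise Brin--Katok argument driven by a random SMB theorem, is the standard route. However, your ``upper bound'' paragraph argues in the wrong direction. Showing that $B_n(x,\delta,\uomega)$ lies in the union of at most $e^{n\beta}$ atoms of $\xi^n_\uomega$ can only bound $m_\uomega(B_n(x,\delta,\uomega))$ from \emph{above}. No information about the masses of those atoms, whether from SMB or Borel--Cantelli, yields $m_\uomega(B_n(x,\delta,\uomega))\ge e^{-n(h_m(T,\xi)+3\beta)}$, because the ball can be a negligible part of every atom it meets. With $\delta$ much smaller than the atoms of $\xi$, it typically is.

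The inclusion you need is the reverse one, and your choice $\diam\xi<\eps$ already provides it. Every point of $\xi^n_\uomega(x)$ stays within distance less than $\eps$ of $T^k_\uomega x$ for $0\le k<n$. Hence $\xi^n_\uomega(x)\subset B_n(x,\eps,\uomega)$ and $-\frac1n\log m_\uomega(B_n(x,\eps,\uomega))\le-\frac1n\log m_\uomega(\xi^n_\uomega(x))$. The random SMB theorem then gives $\overline h(\uomega,x,\eps)\le h_m(T,\xi)\le h_m(T)$ directly, and $U_\delta$, Birkhoff and counting play no role in this half. This matters because $h_m(T)\ge\lim_{\eps\to0}\limsup_{n\to\infty}\bigl(-\frac1n\log m_\uomega(B_n(x,\eps,\uomega))\bigr)$ is exactly the direction the paper uses in Section~\ref{sec:proof} to get $h_\mu(T)\ge\int\log\lambda_\uomega\,d\mathbb P$.

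Your lower-bound paragraph is the classical hard half of Brin--Katok and is sound in outline, but it needs three repairs.

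\emph{The Borel--Cantelli step.} The multiplicity bound alone does not make the bad set summable. The low-visit-frequency points whose Bowen ball meets an atom of mass $>e^{-n(h_m(T,\xi)-\eta)}$ are covered by at most $e^{n(h_m(T,\xi)-\eta+\beta)}$ atoms. You get $m_\uomega$-measure $\le e^{-n(\eta-\gamma-\beta)}$, summable for $\eta>\gamma+\beta$, only after intersecting with the SMB-good set $\{x\colon m_\uomega(\xi^n_\uomega(x))\le e^{-n(h_m(T,\xi)-\gamma)}\}$. This works because a.e.\ $x$ eventually enters that set.

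\emph{The choice of partition.} $\xi$ must have boundary null for $\int m_\uomega\,d\mathbb P$ as well as near-maximal entropy. This is possible because $h_m(T)=\lim_k h_m(T,\xi_k)$ along partitions with $\diam\xi_k\to0$.

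\emph{The non-ergodic case.} Your remark that the general case follows by ergodic decomposition is wrong, because the statement as written is false without ergodicity of $m$. In Example~\ref{ex:ifs}, take $m=\frac12(\mathbb P\otimes\delta_0+\mathbb P\otimes\mathrm{Leb})$. It is $T$-invariant with marginal $\mathbb P$, since $E_2(0)=E_3(0)=0$. Fiber entropy is affine, because the $\log 2$ defect in mixing Shannon entropies disappears after dividing by $n$. So $h_m(T)=\frac12 h_{\mathbb P\otimes\mathrm{Leb}}(T)=\frac12\int\log\omega_0\,d\mathbb P(\uomega)>0$. But $x=0$ carries mass $\frac12$ under every $m_\uomega$, so $m_\uomega(B_n(0,\eps,\uomega))\ge\frac12$ and the local entropy at $0$ is $0$. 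Without ergodicity, the most one can hope for is the integrated identity $\int\lim_{\eps\to0}\overline h\,dm=h_m(T)$. You should therefore either assume $m$ ergodic throughout or prove the integrated form.
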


\vskip10pt \subsection{Nonstationary Transfer Operators}

In this section, we recall the thermodynamic formalism of nonstationary measures from \cite{CH25}. 

The trajectories in $X$ encoded by $\uomega\in\Sigma$ form a nonstationary dynamical system
\[T_{\omega_k}\colon X_{\sigma^k \uomega}\to X_{\sigma^{k+1} \uomega}.
\]
Similar to \eqref{eqn:transOp} for skew products, we can define for $\varphi \equiv 0$ a nonstationary transfer operator
${L}_k\colon C(X_{\omega_k})\to C(X_{\omega_{k+1}})$ by
\begin{equation}\label{eqn:NStransOp}
    ({L}_{\omega_k}\psi)(x)
= \sum_{T_{\omega_k}(y)=x}\psi_{\omega_k}(y).
\end{equation}
The operator ${L}_{\uomega_k}$ is an average over preimages associated to the one-step map $T_{\omega_0}$.
Note that we will iterate these operators by
\[
L_\uomega^n=L_{\omega_{n-1}}\circ\ldots\circ L_{\omega_0}.
\]
Each of these operators has a dual operator
${L}_{\omega_k}^*\colon \mathcal{M}(X_{\sigma^{k+1}\uomega})\to \mathcal{M}(X_{\sigma^k\uomega})$
is defined by
\[
\int_{X_\uomega} \psi\, d({L}_\uomega^*\eta)
=
\int_{X_{\sigma \uomega}} {L}_\uomega\psi\, d\eta
\]
for every $\psi\in C(X_\uomega)$ and every $\eta \in\mathcal{M}(X_{\sigma\uomega})$. \\

We will use a zero-potential version of the nonstationary Ruelle--Perron--Frobenius theorem in \cite[Theorems 1.1 and 1.2]{CH25}, rewritten in our notation.

\begin{theorem}[{Nonstationary RPF theorem}]
\label{thm:nonstationary-rpf}
Suppose that for every $\uomega\in\Sigma$, the IFS $T_\uomega$ satisfies Assumptions \allA. 

Then the following hold.

\begin{enumerate}[label={\rm (\arabic*)}]
    \item There exist uniquely determined families of positive real numbers
    $\{\lambda_{\sigma^n\uomega}\}_{n\in\mathbb{Z}}$ and  probability measures
    $\{\nu_{\sigma^n\uomega}\}_{n\in\mathbb{Z}}$ on $X_{\sigma^n\uomega}$ such that
    \[
    L_{\sigma^n\uomega}^*
    \nu_{\sigma^{n+1}\uomega}
    =
    \lambda_{\sigma^n\uomega}
    \nu_{\sigma^n\uomega}.
    \]

    \item There is a unique family of strictly positive functions $\{h_{\sigma^n\uomega}\}_{\uomega\in\Sigma}$,
    such that
    \[
    \int h_{\sigma^n\uomega}\,    d\nu_{\sigma^n\uomega}=1
\quad \text{ and } \quad
    L_{\sigma^n\uomega}
    h_{\sigma^n\uomega}
    =
    \lambda_{\sigma^n\uomega}
    h_{\sigma^{n+1}\uomega}
    \]
    for every $n\in\mathbb Z$. Morever, there exists a constant $C>1$ (independent of
    $\uomega$, $n$, and $x$) such that
    \begin{equation}
    \label{itm:eigenfunctionbound}
        C^{-1} \le h_{\sigma^n\uomega}(x) \le C
    \quad \text{ for all } x\in X_{\sigma^n\uomega}.
    \end{equation}

    \item The measure
    $\mu_{\sigma^n\uomega}
    = h_{\sigma^n\uomega}
    \nu_{\sigma^n\uomega}$
    is a probability on the fiber $X_{\sigma^n\uomega}$ and satisfies the pseudo-invariance
    relation
    $(T_{\uomega_n})_*
    \mu_{\sigma^n\uomega}
    =
    \mu_{\sigma^{n+1}\uomega}.$
\end{enumerate}
\end{theorem}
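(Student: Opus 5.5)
The plan is to prove the statement by Birkhoff's projective-metric (Hilbert cone) method, adapted to the nonstationary sequence of operators $L_{\sigma^k\uomega}$. The key feature is that every estimate depends only on the constants $D,\delta,\rho,N(\cdot)$ from \allA, and so is uniform in $\uomega$ and $n$. Uniformity is what yields the constant $C$ in \eqref{itm:eigenfunctionbound}.

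\textbf{Step 1: an invariant cone.} I will work with the cone
\[
\mathcal C_a=\{\psi>0:\ \psi(x)\le e^{a\,d_X(x,y)}\psi(y)\text{ whenever }d_X(x,y)\le\delta\}
\]
of locally log-Lipschitz functions. By \ref{ass:expanding}, preimages of $\delta$-close points pair up into $\delta$-close inverse branches whose distances contract by $\rho$. Hence
\[
L_{\sigma^k\uomega}\,\mathcal C_a\subset\mathcal C_{\rho a}\subset\mathcal C_a.
\]

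\textbf{Step 2: uniform finite diameter (the main obstacle).} I want a single $M$ and $\Delta<\infty$, independent of $\uomega$, such that the image $L^M_{\sigma^k\uomega}\mathcal C_a$ has Hilbert diameter at most $\Delta$ in $\mathcal C_a$. The argument would run as follows.
\begin{itemize}
\item Take $\psi\in\mathcal C_{\rho a}$. Its values on a $\delta$-ball are comparable up to the factor $e^{a\delta}$.
\item By \ref{ass:exact} with $\eps=\delta$ and $M=N(\delta)$, every point of $X_{\sigma^{k+M}\uomega}$ has an $M$-preimage in any given $\delta$-ball.
\item Consequently $\sup L^M\psi\le D^M\sup\psi$ and $\inf L^M\psi\ge e^{-a\delta}\sup\psi$, using \ref{ass:degree} for the first bound.
\end{itemize}
Together with the Lipschitz gap between $\mathcal C_{\rho a}$ and $\mathcal C_a$, this gives the diameter bound. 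Birkhoff's theorem then says each block of $M$ steps contracts the Hilbert metric by $\tanh(\Delta/4)<1$.

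\textbf{Step 3: the objects.}
\begin{itemize}
\item Eigenfunctions: set $h_{\sigma^n\uomega}$ to be the projective limit of $L^m_{\sigma^{n-m}\uomega}\one$ as $m\to\infty$. This uses the past of the two-sided sequence, and the limit exists by the contraction in Step 2.
\item Eigenmeasures: dually, $\nu_{\sigma^n\uomega}$ is the weak$^*$ limit of the normalized measures $(L^m_{\sigma^n\uomega})^*\eta/\|\cdot\|$ for an arbitrary probability $\eta$. Convergence follows because
\[
\int\psi\,d(L^m)^*\eta=\int L^m\psi\,d\eta,
\]
and the normalized $L^m\psi$ converge projectively, uniformly in $\eta$.
\item Eigenvalues: define $\lambda_{\sigma^n\uomega}=\int L_{\sigma^n\uomega}\one\,d\nu_{\sigma^{n+1}\uomega}$. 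The relation $L^*\nu_{\sigma^{n+1}\uomega}=\lambda\,\nu_{\sigma^n\uomega}$ then follows by passing to the limit.
\item Normalization: I rescale the $h$'s so that $\int h\,d\nu=1$ and $Lh_{\sigma^n\uomega}=\lambda_{\sigma^n\uomega}h_{\sigma^{n+1}\uomega}$ hold consistently. This is possible because
\[
\int L h\,d\nu_{\sigma^{n+1}\uomega}=\lambda\int h\,d\nu_{\sigma^n\uomega}.
\]
\end{itemize}
The bound $C^{-1}\le h\le C$ comes from two facts: $h$ lies in the image cone, whose sup/inf ratio is at most $e^{\Delta}$, and $\int h\,d\nu=1$.

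\textbf{Step 4: uniqueness and pseudo-invariance.} Uniqueness follows because any other equivariant family would sit at zero projective distance after the contraction. For pseudo-invariance, write $\mu_{\sigma^n\uomega}=h_{\sigma^n\uomega}\nu_{\sigma^n\uomega}$ and take $\psi$ continuous. Then
\[
\int\psi\circ T_{\omega_n}\,d\mu_{\sigma^n\uomega}=\lambda^{-1}\int L(\psi\circ T_{\omega_n}\cdot h)\,d\nu_{\sigma^{n+1}\uomega}=\lambda^{-1}\int\psi\,Lh\,d\nu_{\sigma^{n+1}\uomega}=\int\psi\,d\mu_{\sigma^{n+1}\uomega}.
\]

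I expect Step 2 to be the only delicate point. The difficulty is obtaining the diameter bound with constants independent of the itinerary, which is exactly where the uniformity built into \allA\ is used.
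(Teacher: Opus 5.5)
The paper does not prove this theorem. It imports the zero-potential case from \cite[Theorems 1.1 and 1.2]{CH25}, so there is no in-paper route to compare with.

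Your Birkhoff-cone scheme is the standard one. It does give existence, the bound $C^{-1}\le h_{\sigma^n\uomega}\le C$, pseudo-invariance, and uniqueness of $(\lambda,\nu)$. For the last point, suppose $L^*_{\sigma^n\uomega}\nu'_{\sigma^{n+1}\uomega}=\lambda'_{\sigma^n\uomega}\nu'_{\sigma^n\uomega}$. Then $\nu'_{\sigma^n\uomega}(\psi)=\nu'_{\sigma^{n+m}\uomega}(L^m_{\sigma^n\uomega}\psi)/\nu'_{\sigma^{n+m}\uomega}(L^m_{\sigma^n\uomega}\one)$. This value is trapped between the infimum and supremum of $L^m_{\sigma^n\uomega}\psi/L^m_{\sigma^n\uomega}\one$, which collapse to $\nu_{\sigma^n\uomega}(\psi)$ for $\psi\in\mathcal{C}_a$. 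So no regularity of $\nu'$ is needed.

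One small technical point: \ref{ass:expanding} only gives an injective pairing of the preimages of $x$ and $y$ when, say, $d_X(x,y)<\delta/2$, unless $\rho\le 1/2$. You should define the cone at that smaller scale.

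The genuine gap is uniqueness of $\{h_{\sigma^n\uomega}\}$. ``Zero projective distance after the contraction'' requires the competitor $h'$ to be at finite Hilbert distance from $h$ in $\mathcal{C}_a$. That means $h'$ must lie in $\mathcal{C}_a$, or at least in some $\mathcal{C}_{a'}$ with $a'$ independent of $n$, which $L^j$ then pushes into $\mathcal{C}_a$. A family that is merely strictly positive and continuous can have local log-Lipschitz constants blowing up as $n\to-\infty$. In that generality the uniqueness claim is false.

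Here is a counterexample in Example~\ref{ex:ifs}, where $\nu_\uomega=\mathrm{Leb}$, $h_\uomega\equiv 1$, and $\lambda_\uomega$ is the degree of $T_{\omega_0}$.
\begin{itemize}
\item Fix a non-periodic $\uomega^*$, and let $D_k$ be the product of the degrees of $T_{\omega^*_{-k}},\dots,T_{\omega^*_{-1}}$, with $D_0=1$.
\item Set $h'_{\sigma^{-k}\uomega^*}(x)=1+\tfrac12\cos(2\pi D_k x)$ for $k\ge 0$, and $h'_\uomega\equiv 1$ at all other points of $\Sigma$.
\item Write $L_j$ for the transfer operator of $E_j$. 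The identities $L_j(\phi\circ E_j)=j\phi$ and $L_j\cos(2\pi\,\cdot)=0$ for $j\ge 2$ show that $h'$ satisfies every relation in (2).
\item It even satisfies $\tfrac12\le h'\le\tfrac32$, yet $h'_{\uomega^*}\neq h_{\uomega^*}$.
\end{itemize}

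So uniqueness has to be stated and proved within a regularity class. For families with a uniform local log-Lipschitz constant, your contraction argument goes through. Alternatively, $\mathbb P$-a.e.\ uniqueness among measurable families can be obtained: $\sup(h'_\uomega/h_\uomega)$ is non-increasing along $\sigma$-orbits, and by \ref{ass:exact} it strictly drops after finitely many steps unless the ratio is constant. That is a separate argument, which you have not given.
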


\vskip10pt \subsection{Main Results}

In this section, we state our main results. First, we state an important technical theorem about the scaling of $(\uomega,n)$-Bowen balls. Its proof is given in Section \ref{sec:wGibbs}.

\begin{theorem}[Weak-Gibbs property]
\label{thm:weak-gibbs}
Let $(X,\Phi)$ be an IFS of expanding type. Let
$\{\lambda_\uomega\}_{\uomega\in\Sigma}$ and
$\{\nu_\uomega\}_{\uomega\in\Sigma}$ be the eigenvalues and eigenmeasures
given by Theorem~\ref{thm:nonstationary-rpf} for the associated skew product $(\Sigma\times X,T)$. 

Then for every $\epsilon\in(0,\delta]$, where $\delta$ is the constant from
assumption \ref{ass:expanding}, there exists a constant $\ell_\epsilon>0$ such
that for every $n\ge1$, every $\uomega\in\Sigma$, and every $x\in X$,
\[
    \ell_\epsilon
    \leq
    \frac{\nu_\uomega(B_n(x,\epsilon,\uomega))}
    {\exp(-S_n\log\lambda(\uomega))}
    \leq
    1,
\]
where $S_n\log\lambda(\uomega)    :=    \sum_{k=0}^{n-1}\log\lambda_{\sigma^k\uomega}.$
\end{theorem}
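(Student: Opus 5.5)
Both inequalities come from iterating the eigen-relation of Theorem~\ref{thm:nonstationary-rpf}(1). By induction on $n$ it gives, for every Borel $\psi\ge 0$ on $X_\uomega$,
\[
    \int_{X_\uomega}\psi\,d\nu_\uomega
    =
    e^{-S_n\log\lambda(\uomega)}\int_{X_{\sigma^n\uomega}} L_\uomega^n\psi\,d\nu_{\sigma^n\uomega},
    \qquad
    (L_\uomega^n\psi)(z)=\sum_{T_\uomega^n y=z}\psi(y).
\]
We take $\psi=\one_{B_n(x,\epsilon,\uomega)}$; passing from continuous functions to indicators is standard monotone approximation. The whole problem then reduces to estimating the counting function $z\mapsto\#\{y\in B_n(x,\epsilon,\uomega)\colon T_\uomega^n y=z\}$ from above and below.

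\textbf{Upper bound.} Fix $\epsilon\le\delta$. Suppose $y,y'\in B_n(x,\epsilon,\uomega)$ with $T_\uomega^n y=T_\uomega^n y'$. For each $k<n$ we have $d(T^k_\uomega y,T^k_\uomega y')<2\epsilon$. We want to run backward induction using the expansion in \ref{ass:expanding}: equal images at time $n$ should force equality at time $n-1$, and so on down to time $0$, giving $y=y'$. This needs injectivity of each $T_{\omega_k}$ on sets of diameter less than $2\epsilon$. The expansion inequality in \ref{ass:expanding} gives injectivity on $\delta$-balls, so points at distance at most $\delta$ with equal images coincide. If necessary I will shrink to $\epsilon\le\delta/2$ and handle $\epsilon\in(\delta/2,\delta]$ by monotonicity of Bowen balls, at the cost of a constant. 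Strictly, the stated upper bound is exactly $1$, so I will argue directly that the $d$-distance $<2\epsilon$ can be replaced by comparison through $x$: both $T^k y$ and $T^k y'$ are $\epsilon$-close to $T^k x$, and the local inverse branch of $T_{\omega_k}$ along the orbit of $x$ is well defined on $B(T^{k+1}x,\delta)$. Hence the counting function is at most $1$ pointwise, and $\nu_\uomega(B_n)\le e^{-S_n\log\lambda}\nu_{\sigma^n\uomega}(X)=e^{-S_n\log\lambda}$.

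\textbf{Lower bound.} Using \ref{ass:expanding} iteratively, the inverse branch along the orbit of $x$ maps $B(T^n_\uomega x,\epsilon;\sigma^n\uomega)$ into $B_n(x,\epsilon,\uomega)$. Indeed, the locally onto property gives preimages, and contraction by $\rho$ under backward steps keeps every intermediate point within $\epsilon$ of $T^k_\uomega x$. So the counting function is at least $\one_{B(T_\uomega^n x,\epsilon)}$, and
\[
    \nu_\uomega(B_n(x,\epsilon,\uomega))\ge e^{-S_n\log\lambda(\uomega)}\,\nu_{\sigma^n\uomega}\big(B(T^n_\uomega x,\epsilon)\big).
\]
It therefore suffices to show $\inf_{\uomega',z}\nu_{\uomega'}(B(z,\epsilon))=:\ell_\epsilon>0$.

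\textbf{Main obstacle: the uniform lower bound on balls.} For this I will use \ref{ass:exact} together with \ref{ass:degree}. Take $N=N(\epsilon)$, so that $T^N_{\uomega'}(B(z,\epsilon))=X_{\sigma^N\uomega'}$. Apply the displayed identity with $n=N$ and $\psi=\one_{B(z,\epsilon)}$. The counting function is then at least $1$ everywhere, which gives $\nu_{\uomega'}(B(z,\epsilon))\ge e^{-S_N\log\lambda(\uomega')}$.

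It remains to bound $\lambda$ above uniformly. Integrating the eigen-relation with $\psi\equiv1$ gives $\lambda_{\uomega'}=\int L_{\omega'_0}1\,d\nu_{\sigma\uomega'}\le D$. Hence $\ell_\epsilon\ge D^{-N(\epsilon)}>0$, uniformly in $\uomega'$ and $z$. The delicate points are the measurability and approximation of indicators, and the careful injectivity argument at scale $\epsilon$ versus $\delta$; I expect the latter to be where care is needed.
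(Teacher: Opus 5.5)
Your strategy is the paper's. You iterate the eigen-relation and bound the counting function $z\mapsto\#\{y\in B_n(x,\eps,\uomega)\colon T^n_\uomega y=z\}$ above by $1$ and below by $\one_{B(T^n_\uomega x,\eps)}$. You then get $\inf\nu_{\uomega'}(B(z,\eps))\ge D^{-N(\eps)}$ from \ref{ass:exact} and $\lambda_{\uomega'}\le D$. The lower half is fine as written, since it needs only existence of preimages plus contraction. Splitting the count into ``$\le 1$'' and ``$\ge\one_{B(T^n_\uomega x,\eps)}$'' is actually more accurate than the paper's claim that $T^n_\uomega$ is a bijection of $B_n(x,\eps,\uomega)$ onto $B(T^n_\uomega x,\eps)$, because the Bowen ball constrains only the times $0\le k<n$.

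The gap is the upper bound for $\eps\in(\delta/2,\delta]$. Your ``comparison through $x$'' does not give injectivity. A well-defined inverse branch of $T_{\omega_k}$ on $B(T^{k+1}_\uomega x,\delta)$ tells you that some preimage lies near $T^k_\uomega x$. It does not tell you that $T_{\omega_k}$ has only one preimage in $B(T^k_\uomega x,\eps)$; that would need the expansion inequality for pairs at distance up to $2\eps$, which \ref{ass:expanding} does not supply. Moreover, at the first backward step the common image $T^n_\uomega y=T^n_\uomega y'$ is not constrained to lie in $B(T^n_\uomega x,\delta)$ at all.

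No argument can close this, because the bound with constant $1$ is false in that range. Let every generator be the doubling map $E_2$ on $\sS$, so that $\lambda_\uomega\equiv 2$ and $\nu_\uomega=\mathrm{Leb}$. Assumption \ref{ass:expanding} holds with $\delta=0.3$ and $\rho=3/4$:
\begin{itemize}
\item for $d(x,x')\le 1/4$ the distance doubles;
\item for $1/4<d(x,x')\le 0.3$ one has $d(E_2x,E_2x')=1-2d(x,x')\ge\frac43 d(x,x')$;
\item $E_2(B(x,0.3))=\sS$.
\end{itemize}
Yet for $n=1$ and $\eps=0.3$ you get $\nu(B_1(x,0.3))=0.6>\frac12=e^{-S_1\log\lambda}$. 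The paper's proof, through Lemma~\ref{lem:mapping}, takes the needed injectivity for granted and so has the same defect.

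Your first instinct is the correct repair: restrict to $\eps\le\delta/2$. Then any two points of $B(T^k_\uomega x,\eps)$ are within $2\eps\le\delta$ of each other, so equal images force equality by \ref{ass:expanding}. Backward induction from time $n$ then gives a count of at most $1$. This smaller range is all that the later arguments use: small $\eps$ for the entropy lower bound, and radius $\eps/2$ in Proposition~\ref{prop:top-upper}.
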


\vskip10pt
Now we state the general form of Theorem \ref{thmA} which we prove in Section \ref{sec:proof}.

\begin{theorem}
    Let $(X,\Phi)$ be an IFS of expanding type, and $\mathbb{P}$ be an ergodic $\sigma$-invariant probability measure on $\Sigma$. Let $\{\lambda_\uomega\}_{\uomega\in\Sigma}$ be the eigenvalues and eigenmeasures given by Theorem~\ref{thm:nonstationary-rpf} for the associated skew product $(\Sigma\times X,T)$. 
    
    Then
        \[
        h_{\text{top}}(T) =\int\log\lambda_\uomega\ d\mathbb{P}(\uomega)= \sup\{h_m(T)\colon m\in\mathcal{M}_\mathbb{P}(\Sigma \times X, T)\}
        \]
    where $h_m(T)$ is the metric entropy of $m$ on the skew product associated to $\Phi$.
\end{theorem}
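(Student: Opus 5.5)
The plan is to prove the two equalities separately, using the weak-Gibbs estimate (Theorem~\ref{thm:weak-gibbs}) as the bridge between eigenvalues and both kinds of entropy. Fix $\eps\in(0,\delta]$ and write $S_n\log\lambda(\uomega)=\sum_{k=0}^{n-1}\log\lambda_{\sigma^k\uomega}$. First I would check that $\log\lambda_\uomega$ is bounded uniformly in $\uomega$. Integrating $L^*_\uomega\nu_{\sigma\uomega}=\lambda_\uomega\nu_\uomega$ against $\one$ gives $\lambda_\uomega=\int\#T_{\omega_0}^{-1}(z)\,d\nu_{\sigma\uomega}(z)\in[1,D]$ by surjectivity and \ref{ass:degree}. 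Hence $\log\lambda\in L^1(\mathbb P)$, and by Birkhoff's theorem $\frac1n S_n\log\lambda(\uomega)\to\int\log\lambda_\uomega\,d\mathbb P$ for $\mathbb P$-a.e.\ $\uomega$.

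\textbf{Step 1: $h_{\mathrm{top}}(T)=\int\log\lambda\,d\mathbb P$.} For the upper bound, take a maximal $(\uomega,n,\eps)$-separated set $E$. The Bowen balls $B_n(x,\eps/2,\uomega)$, $x\in E$, are pairwise disjoint. The lower weak-Gibbs bound then gives
\[
1\ge\sum_{x\in E}\nu_\uomega(B_n(x,\eps/2,\uomega))\ge s(\uomega,n,\eps)\,\ell_{\eps/2}\,e^{-S_n\log\lambda(\uomega)}.
\]
For the lower bound, a maximal separated set is spanning, so the balls $B_n(x,2\eps,\uomega)$ cover $X_\uomega$ (after shrinking $\eps$ so that $2\eps\le\delta$). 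The upper weak-Gibbs bound then gives $1\le s(\uomega,n,\eps)e^{-S_n\log\lambda(\uomega)}$. Taking $\frac1n\log$, applying Birkhoff, and then using Proposition~\ref{prop:fiber-entropy} (ergodic case, $h_{\mathrm{top}}(T)=h_{\mathrm{top}}(T,\uomega)$ a.e.) yields the first equality. The estimates are independent of $\eps$, so the limit $\eps\to0$ is harmless.

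\textbf{Step 2: the supremum is attained by $\mu=\int\mu_\uomega\,d\mathbb P$.} By Theorem~\ref{thm:nonstationary-rpf}(3), the family $\mu_\uomega=h_\uomega\nu_\uomega$ is equivariant. I need the map $\uomega\mapsto\mu_\uomega$ to be measurable, which I expect follows from the construction in \cite{CH25} as limits of continuous objects, so $\mu\in\mathcal M_{\mathbb P}(\Sigma\times X,T)$. By \eqref{itm:eigenfunctionbound}, $\mu_\uomega(B_n(x,\eps,\uomega))$ is comparable to $\nu_\uomega(B_n(x,\eps,\uomega))$ up to the factor $C$. Proposition~\ref{prop:randBrinKatok} together with weak-Gibbs and Birkhoff then gives $h_\mu(T)=\int\log\lambda\,d\mathbb P$. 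Finiteness of $h_\mu(T)$ needs a preliminary bound, and I would take it from the Step 3 argument.

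\textbf{Step 3: $h_m(T)\le\int\log\lambda\,d\mathbb P$ for every $m\in\mathcal M_{\mathbb P}$.} This is the main obstacle. I would try a fiberwise partition-counting argument. Take a partition $\xi$ of $X$ with diameter less than $\eps$. Each atom of $\xi^n_\uomega$ lies in a Bowen ball, so it has $\nu_\uomega$-measure at most $e^{-S_n\log\lambda(\uomega)}$ by the upper weak-Gibbs bound. Counting atoms via a spanning set as in Step 1 bounds their number by $C_\eps e^{S_n\log\lambda(\uomega)}$ (each atom meets only boundedly many $2\eps$-Bowen balls; this needs a uniform local count, which is the delicate point). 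Then $H_{m_\uomega}(\xi^n_\uomega)\le\log\#\xi^n_\uomega\le S_n\log\lambda(\uomega)+\log C_\eps$. Integrating, dividing by $n$, and using $\sigma$-invariance of $\mathbb P$ gives $h_m(T,\xi)\le\int\log\lambda\,d\mathbb P$. Since the expanding property makes a partition of diameter below $\delta$ generating fiberwise, this extends to all partitions, and $h_m(T)$ is finite. If the atom count proves awkward, I would fall back on the standard relative-variational argument: Theorem~\ref{thm:LW77} with $\varphi\equiv0$ bounds $h_m(T|\sigma)$ by $\int h_{\mathrm{top}}(T,\uomega)\,d\mathbb P$, which equals $\int\log\lambda\,d\mathbb P$ by Step 1.
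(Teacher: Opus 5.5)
Your Steps 1 and 2 are the paper's argument. Proposition~\ref{prop:top-upper} bounds $s(\uomega,n,\eps)$ from above exactly as you do, using disjoint $\eps/2$-Bowen balls and the lower weak-Gibbs bound. The first proposition of Section~\ref{sec:proof} gets $h_\mu(T)\ge\int\log\lambda_\uomega\,d\mathbb P(\uomega)$ for $\mu=\int h_\uomega\nu_\uomega\,d\mathbb P(\uomega)$ from the upper weak-Gibbs bound and Proposition~\ref{prop:randBrinKatok}. Your reverse estimates ($s(\uomega,n,\eps)\ge e^{S_n\log\lambda(\uomega)}$ via spanning sets, and $h_\mu(T)\le\int\log\lambda_\uomega\,d\mathbb P$) are correct but redundant once the chain of inequalities closes.

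The genuine difference is Step 3. The paper gets $\sup_m h_m(T)\le h_{\mathrm{top}}(T)$ by saying it holds ``by definition.'' In fact it is the easy half of a relative variational principle, and your fallback is the right way to justify it:
\begin{itemize}
\item the Bowen metric of $T$ on $\pi^{-1}\uomega$ is exactly $d^n_\uomega$, since the base coordinate is common;
\item the fiber entropy of Section~\ref{sec:metricEntropy} is at most the Ledrappier--Walters conditional entropy $h_m(T|\sigma)$. Lifting $\xi$ to $\hat\xi=\{\Sigma\times A\}_{A\in\xi}$ gives $H_m(\hat\xi^n\mid\pi^{-1}\mathcal B_\Sigma)=\int H_{m_\uomega}(\xi^n_\uomega)\,d\mathbb P(\uomega)$, where $\hat\xi^n=\bigvee_{k<n}T^{-k}\hat\xi$.
\end{itemize}
Theorem~\ref{thm:LW77} with $\varphi\equiv0$, together with Step 1, then gives $h_m(T)\le\int\log\lambda_\uomega\,d\mathbb P\le\log D$ for every $m\in\mathcal M_{\mathbb P}(\Sigma\times X,T)$. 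This also supplies the hypothesis $h_\mu(T)<\infty$ of Proposition~\ref{prop:randBrinKatok}, which the paper never checks. If you further invoke the standard identification of $h_m(T|\sigma)$ with the fiber entropy for an invertible base, the equality in Theorem~\ref{thm:LW77} plus Step 1 proves the whole statement, and Step 2 only exhibits a maximizer.

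Your primary Step 3 argument, on the other hand, does not work and should be dropped. To bound $\#\xi^n_\uomega$ by the size of a spanning set, you need each Bowen ball to meet boundedly (or subexponentially) many atoms of $\xi^n_\uomega$. That is the reverse of what you wrote. For a general finite partition it fails however small the diameter, because nothing controls the boundaries of the atoms. For instance, in Example~\ref{ex:ifs}, refine a fine interval partition of $\sS$ by a countable set $A$. Choose $A$ so that, for every $n$, every pattern of visits to $A$ along $n$-step orbit segments is realized inside every atom of the refined interval partition. Then $\#\xi^n_\uomega$ is at least of order $2^n e^{S_n\log\lambda(\uomega)}$, and $H_{m_\uomega}(\xi^n_\uomega)\le\log\#\xi^n_\uomega$ gives nothing. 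The $\nu_\uomega$-bound on atoms is beside the point, since you are estimating $H_{m_\uomega}$ for arbitrary $m$. Passing to a generator does not help unless you produce one with controlled boundaries uniformly along fibers, which you have not done.

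The standard repair is Misiurewicz's:
\begin{itemize}
\item Replace $\xi$ by a partition $\eta$ all of whose atoms but one are compact and mutually separated, at arbitrarily small cost in $\int H_{m_\uomega}(\xi\mid\eta)\,d\mathbb P$.
\item Each small Bowen ball then meets at most $2^n$ atoms of $\eta^n_\uomega$, which gives $h_m(T,\xi)\le\int\log\lambda_\uomega\,d\mathbb P+\log2$ up to an arbitrarily small error.
\item Remove the $\log2$ by applying the bound to the skew product over $\sigma^k$ and dividing by $k$.
\end{itemize}
That is exactly how the inequality you import from Theorem~\ref{thm:LW77} is proved.
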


\section{Weak-Gibbs Property}
\label{sec:wGibbs}

In this section, we prove Theorem \ref{thm:weak-gibbs} by establishing a weak-Gibbs property for the nonstationary eigenmeasures given by Theorem \ref{thm:nonstationary-rpf}. For convenience, we will write the product of eigenvalues along an orbit segment as
\[
\lambda_\uomega^n
:=
\prod_{k=0}^{n-1}
\lambda_{\sigma^k\uomega}.
\]

\begin{lemma}\label{lem:mapping}
    Let $\uomega\in\Sigma$, $x\in X_\uomega$, and $n\in\NN$. Fix $0<\epsilon\le\delta$, where $\delta$ is the constant from
\ref{ass:expanding}. The maps $T_{\uomega}^n$ maps $B_n(x,\eps,\uomega)$ homeomorphically onto the ball $B(T_{\uomega}^nx,\eps)$.
    Moreover, for every $0\leq k\leq n$ and $x, y \in B_n(x,\eps,\uomega)$,
    \[
    d_X(T^k_\uomega y, T^k_\uomega x)\leq \rho^{n-k} d_X(T^n_\uomega y, T^n_\uomega x)\quad \text{for all }  0\leq k < n.
    \]
\end{lemma}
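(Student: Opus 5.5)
The plan is to argue by induction on $n$, applying assumption \ref{ass:expanding} one step at a time along the itinerary $\omega_0,\omega_1,\dots$. The key local fact comes first. For any $\uomega$ and any $z\in X_\uomega$, the map $T_{\omega_0}$ restricted to $B(z,\delta;\uomega)$ is injective, because $d_X(y,y')\le\rho\, d_X(T_{\omega_0}y,T_{\omega_0}y')$ for $y,y'$ within $\delta$. It is also continuous. Its image contains $B(T_{\omega_0}z,\delta;\sigma\uomega)$ by the locally onto clause.

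Set $U:=B(z,\delta)\cap T_{\omega_0}^{-1}B(T_{\omega_0}z,\eps)$ for $\eps\le\delta$. Then $T_{\omega_0}$ maps $U$ bijectively onto $B(T_{\omega_0}z,\eps)$. The inverse is $\rho$-Lipschitz, hence continuous, so this bijection is a homeomorphism. Moreover, any $y\in U$ satisfies $d_X(y,z)\le\rho\,d_X(T_{\omega_0}y,T_{\omega_0}z)<\rho\eps<\eps$. Consequently $U=B(z,\eps)\cap T_{\omega_0}^{-1}B(T_{\omega_0}z,\eps)=B_2(z,\eps,\uomega)$. This is exactly the case $n=1$ of a one-step version, with $B_2$ in place of $B_n$.

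A bookkeeping issue needs care. By definition, $B_n(x,\eps,\uomega)$ controls only the iterates $0\le k<n$, whereas the lemma refers to $T^n_\uomega$ and uses the ball around $T^n_\uomega x$. I will therefore read the statement as saying that $T^{n}_\uomega$ maps $B_{n+1}(x,\eps,\uomega)$ homeomorphically onto $B(T^n_\uomega x,\eps)$, or equivalently that $T^{n-1}_\uomega$ maps $B_n$ onto $B(T^{n-1}_\uomega x,\eps)$. I will then prove the indexing-consistent version by induction. The identity $B_{n+1}(x,\eps,\uomega)=B(x,\eps)\cap T_{\omega_0}^{-1}B_n(T_{\omega_0}x,\eps,\sigma\uomega)$ reduces the step from $n$ to $n+1$ to the one-step fact above. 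Concretely, $T_{\omega_0}$ is a homeomorphism from $B_{n+1}(x,\eps,\uomega)$ onto $B_n(T_{\omega_0}x,\eps,\sigma\uomega)$. The latter is contained in $B(T_{\omega_0}x,\eps)$, so the one-step homeomorphism restricts to it, and its preimage automatically lies in $B(x,\eps)$ by the contraction estimate. Composing with the inductive homeomorphism $T^{n-1}_{\sigma\uomega}$ gives the claim.

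For the distance estimate, take $y$ in the Bowen ball. For every $k<n$ the points $T^k_\uomega y$ and $T^k_\uomega x$ are within $\eps\le\delta$, so \ref{ass:expanding} applied at the fiber $\sigma^k\uomega$ gives $d(T^k_\uomega y,T^k_\uomega x)\le\rho\, d(T^{k+1}_\uomega y,T^{k+1}_\uomega x)$. Chaining these inequalities from $k$ up to $n$ yields the factor $\rho^{n-k}$. The only genuine subtlety, and the main obstacle, is the indexing between $B_n$ and the final iterate just discussed. It must be handled so that every one-step application of \ref{ass:expanding} occurs at a pair of points already known to be $\delta$-close.
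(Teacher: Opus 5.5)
Your proof is essentially the paper's. Both extract a one-step local inverse of $T_{\omega_k}$ from \ref{ass:expanding} and iterate it: you do so by induction through $B_{n+1}(x,\eps,\uomega)=B(x,\eps)\cap T_{\omega_0}^{-1}B_n(T_{\omega_0}x,\eps,\sigma\uomega)$, the paper by composing $G_{\uomega,n}=g_0\circ\cdots\circ g_{n-1}$. Both get the contraction estimate by chaining the one-step inequality.

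Your indexing correction is right, and it repairs the paper's argument. Since $d^n_\uomega$ is a maximum over $0\le k<n$, the paper's converse step ``$u\in B_n(x,\eps,\uomega)$ implies $T^n_\uomega u\in B(x_n,\eps)$'' is unjustified. The statement as written already fails in Example~\ref{ex:ifs} with $n=1$: there $B_1(x,\eps,\uomega)=B(x,\eps)$, and $E_2(B(x,\eps))=B(2x,2\eps)\neq B(2x,\eps)$. Downstream this is harmless. With your version, the bounds of Theorem~\ref{thm:wGibbs} hold with the upper constant $1$ replaced by $\lambda_{\sigma^{n-1}\uomega}\le D$. The lower constant is unchanged, because $\lambda\ge 1$ by surjectivity.

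One step, however, breaks the rule you state at the end, namely that \ref{ass:expanding} may only be applied to pairs already known to be $\delta$-close. You assert that $T_{\omega_0}$ is injective on $B(z,\delta)$ and that its inverse on $U$ is $\rho$-Lipschitz, citing the expansion clause. But two points of $B(z,\delta)$ may be almost $2\delta$ apart, and the clause says nothing about such pairs.

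What you need is injectivity on $U$ only, and your own estimate nearly gives it. Every $y\in U$ satisfies $d_X(y,z)<\rho\eps$, so points of $U$ are less than $2\rho\eps$ apart. When $2\rho\eps\le\delta$ (for instance $\eps\le\delta/2$), the clause gives injectivity and the $\rho$-Lipschitz inverse at once. For $\eps\in(\delta/2,\delta]$ you have two options:
\begin{itemize}
\item read \ref{ass:expanding} as also asserting that each generator is injective on $\delta$-balls, which is exactly what the paper tacitly uses when it posits a unique contracting branch $g_k\colon B(x_{k+1},\eps)\to B(x_k,\eps)$; or
\item restrict the lemma to $\eps\le\delta/2$, which is all the entropy arguments of Section~\ref{sec:proof} require.
\end{itemize}
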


\begin{proof}
We will denote the iterates along the orbit of $x=x_0$ as $x_k:=T_{\omega_0}^k(x)$.
Recall $x_k=T_{\uomega}^k(x)$. Since $\uomega$ is fixed, the
one-step map for the $k$-th iterate is the generator
$T_{\omega_k}\colon X_{\sigma^k\uomega}\to X_{\sigma^{k+1}\uomega}$.
By the locally-onto property of \ref{ass:expanding},
$T_{\omega_k}$ restricts to a homeomorphism in a neighborhood of $x_k$ onto
$B(x_{k+1},\epsilon)$. This gives a unique local inverse branch
\[
  g_k\colon B(x_{k+1},\epsilon)\to B(x_k,\epsilon),\qquad g_k(x_{k+1})=x_k,
\]
which is uniformly contracting: $d(g_k(a),g_k(b))\le \rho\,d(a,b)$ for all
$a,b\in B(x_{k+1},\epsilon)$  ($\rho\in(0,1)$ from \ref{ass:expanding}).
We can apply the same process for each $0\le k< n$. Hence, inductively, we have
\[
  d_X(T_{\uomega}^{\,k}a,\,T_{\uomega}^{\,k}b)\le \rho^{\,n-k}\,
  d_X(T_{\uomega}^{\,n}a,\,T_{\uomega}^{\,n}b).
\]

Define $G_{\uomega,n}:=g_0\circ g_1\circ\cdots\circ g_{n-1}\colon 
B(x_n,\varepsilon)\to X_\uomega.$
That is, for $y\in B(x_n,\varepsilon)$, $G_{\uomega,n}(y)$ is the preimage of $y$ under $T_\uomega^n$. For $y_n:=y\in B(x_n,\varepsilon)$, we can track the orbit of $y$ by writing
$
y_k:=g_k\circ g_{k+1}\circ\cdots\circ g_{n-1}(y)$ for each $0\le k\le n-1$.
Then for every $0\le k< n$,
$T_\uomega^k(G_{\uomega,n}(y))=y_k$ and
\[
d(y_k,x_k)\le \rho^{n-k}d(y_n,x_n)<\varepsilon.
\]
Therefore,
$G_{\uomega,n}(B(x_n,\varepsilon))\subset B_n(x,\varepsilon,\uomega).$

Conversely, if $u\in B_n(x,\varepsilon,\uomega)$, then
$T_\uomega^n u\in B(x_n,\varepsilon).
$
Moreover, every intermediate iterate $T_\uomega^k u$ lies in
$B(x_k,\varepsilon)$. Thus, $u=G_{\uomega,n}(T_\uomega^n u)$.
Hence,
\[
B_n(x,\varepsilon,\uomega)= G_{\uomega,n}(B(x_n,\varepsilon)).
\]
Thus,
$T_\uomega^n:B_n(x,\varepsilon,\uomega)\to B(x_n,\varepsilon)$
is bijective.
\end{proof}

\begin{theorem}
    \label{thm:wGibbs}
    Let $(X,\Phi)$ be an IFS of expanding type. Let
$\{\lambda_\uomega\}_{\uomega\in\Sigma}$ and
$\{\nu_\uomega\}_{\uomega\in\Sigma}$ be the eigenvalues and eigenmeasures
given by Theorem~\ref{thm:nonstationary-rpf} for the associated skew product $(\Sigma\times X,T)$. 

Then for every $\epsilon\in(0,\delta]$, where $\delta$ is the constant from
assumption \ref{ass:expanding}, there exists a constant $\ell_\epsilon>0$ such
that for every $n\ge1$, every $\uomega\in\Sigma$, and every $x\in X$,
\[
    \ell_\epsilon e^{-S_n\log\lambda(\uomega)}
    \leq
    \nu_\uomega(B_n(x,\epsilon,\uomega))
    \leq
    e^{-S_n\log\lambda(\uomega)}.
\]
\end{theorem}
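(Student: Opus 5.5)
The plan is to iterate the eigenmeasure relation of Theorem~\ref{thm:nonstationary-rpf}(1) and get $(L^n_\uomega)^*\nu_{\sigma^n\uomega}=\lambda^n_\uomega\,\nu_\uomega$. This is a telescoping induction: $L^*_{\omega_0}\cdots L^*_{\omega_{n-1}}\nu_{\sigma^n\uomega}=\lambda_\uomega\cdots\lambda_{\sigma^{n-1}\uomega}\nu_\uomega$. Applying it to the indicator of the Bowen ball $B_n:=B_n(x,\eps,\uomega)$ gives
\[
\lambda^n_\uomega\,\nu_\uomega(B_n)=\int_{X_{\sigma^n\uomega}} L^n_\uomega\one_{B_n}\,d\nu_{\sigma^n\uomega},
\qquad
(L^n_\uomega\one_{B_n})(z)=\#\{y\in B_n: T^n_\uomega y=z\}.
\]
Since indicators are not continuous, I will justify this by approximating $\one_{B_n}$ from below by continuous functions (the ball is open) and using monotone convergence. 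This works because $L^n_\uomega$ is a finite positive sum over preimages.

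By Lemma~\ref{lem:mapping}, $T^n_\uomega$ maps $B_n$ bijectively onto $B(T^n_\uomega x,\eps)$. Hence $L^n_\uomega\one_{B_n}=\one_{B(T^n_\uomega x,\eps)}$, and
\[
\nu_\uomega(B_n)=(\lambda^n_\uomega)^{-1}\,\nu_{\sigma^n\uomega}\big(B(T^n_\uomega x,\eps)\big)=e^{-S_n\log\lambda(\uomega)}\,\nu_{\sigma^n\uomega}\big(B(T^n_\uomega x,\eps)\big).
\]
The upper bound then follows immediately, since $\nu_{\sigma^n\uomega}$ is a probability measure.

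The lower bound reduces to a uniform positivity statement:
\[
\ell_\eps:=\inf_{\uomega'\in\Sigma,\ z\in X}\nu_{\uomega'}\big(B(z,\eps)\big)>0.
\]
I expect this to be the main step. I will prove it using \ref{ass:exact} together with the eigen-relation. Take $N=N(\eps)$ and apply the same identity with $n=N$ to $\psi=\one_{B(z,\eps)}$ over $\uomega'$:
\[
\lambda^N_{\uomega'}\,\nu_{\uomega'}(B(z,\eps))=\int L^N_{\uomega'}\one_{B(z,\eps)}\,d\nu_{\sigma^N\uomega'}.
\]
By exactness, every point of $X_{\sigma^N\uomega'}$ has at least one $T^N_{\uomega'}$-preimage in $B(z,\eps)$, so the integrand is at least $1$. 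This gives $\nu_{\uomega'}(B(z,\eps))\ge(\lambda^N_{\uomega'})^{-1}$.

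It then remains to bound $\lambda$ above uniformly. Testing the eigen-relation on the constant function $1$ gives $\lambda_{\uomega'}=\int L_{\omega'_0}1\,d\nu_{\sigma\uomega'}\le D$ by \ref{ass:degree}. Therefore $\lambda^N_{\uomega'}\le D^N$, and we may take $\ell_\eps=D^{-N(\eps)}$, which is independent of $\uomega$, $n$ and $x$. Combining this with the displayed identity finishes the lower bound.

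The only delicate point I anticipate is measurability and the approximation of indicators in the duality relation. A secondary point is that the strict-inequality ball in Lemma~\ref{lem:mapping} matches the open ball used in exactness. Both should be routine.
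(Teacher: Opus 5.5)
Your proposal is correct relative to the paper and is essentially the paper's own proof: you test the iterated eigen-relation on $\one_{B_n}$ with $B_n=B_n(x,\eps,\uomega)$, use Lemma~\ref{lem:mapping} to identify $L^n_\uomega\one_{B_n}$ with $\one_{B(T^n_\uomega x,\eps)}$, and use exactness plus $\lambda_{\uomega'}\le D$ to obtain $\ell_\eps=D^{-N(\eps)}$, while your monotone approximation of indicators by continuous functions supplies a detail the paper leaves implicit. One caveat you inherit from the paper: since $d^n_\uomega$ only uses $0\le k<n$, the inverse branches give only $T^n_\uomega(B_n)\supseteq B(T^n_\uomega x,\eps)$, hence $L^n_\uomega\one_{B_n}\ge\one_{B(T^n_\uomega x,\eps)}$, which is enough for your lower bound, whereas the upper bound needs $L^n_\uomega\one_{B_n}\le 1$, i.e.\ injectivity of $T^n_\uomega$ on $B_n$, which \ref{ass:expanding} yields by backward induction when $2\eps\le\delta$ but which can fail for $\eps$ near $\delta$ (for $\{E_2,E_3\}$ with the admissible choice $\delta=1/5$, $\rho=1/2$, taking $n=1$ and $\omega_0$ selecting $E_3$ gives $\nu_\uomega(B(x,1/5))=2/5>1/3=\lambda_\uomega^{-1}$).
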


\begin{proof}
Fix $\varepsilon\in(0,\delta]$, where $\delta$ is the constant from
assumption~\ref{ass:expanding}; this is the range for which the local inverse
branches of Lemma~\ref{lem:mapping} exist.
By Theorem~\ref{thm:nonstationary-rpf}, we know
$(L_\uomega^n)^*\nu_{\sigma^n\uomega}
= \lambda_\uomega^n\nu_\uomega$.
Since $\phi=0$, the operator $L_\uomega^n$ only counts preimages. Hence,
\begin{equation*}
L_\uomega^n\one_{B_n(x,\varepsilon,\uomega)}(y)
=
\#\Bigl((T_\uomega^n)^{-1}(y)\cap B_n(x,\varepsilon,\uomega)\Bigr)
=
\one_{B(x_n,\varepsilon)}(y),
\end{equation*}
by the bijectivity established in Lemma \ref{lem:mapping}. Estimating the measure of a ball in $X_{\sigma^n\uomega}$, we get
\begin{align*}
\nu_{\sigma^n\uomega}(B(x_n,\varepsilon))
&=
\int L_\uomega^n\one_{B_n(x,\varepsilon,\uomega)}\, d\nu_{\sigma^n\uomega}
=
\lambda_\uomega^n \nu_\uomega(B_n(x,\varepsilon,\uomega)).
\end{align*}
Equivalently, since $\nu_{\sigma^n\uomega}$ is a probability measure,
\begin{equation}\tag{3}
\frac{\nu_\uomega(B_n(x,\varepsilon,\uomega))}{e^{-\log\lambda_\uomega^n}}
=
\nu_{\sigma^n\uomega}(B(T_\uomega^n x,\varepsilon))\leq 1.
\end{equation}

By the fiberwise exactness from assumption \ref{ass:exact}, for fixed $\varepsilon>0$, there exists
$N(\varepsilon)\in\mathbb{N}$ such that for every base point $\uomega\in\Sigma$
and every fiber point 
$z\in X_{\sigma^n\uomega}$, we have
\begin{equation*}
T_{\sigma^n\uomega}^{N(\varepsilon)}(B(z,\varepsilon))
=
X_{\sigma^{n+N(\varepsilon)}\uomega}.
\end{equation*}
This means every point of $X_{\sigma^{n+N(\varepsilon)}\uomega}$ has at least
one preimage in $B(z,\varepsilon)$, hence
\[
L_{\sigma^n\uomega}^{N(\varepsilon)}\one_{B(z,\varepsilon)}(u)\ge 1
\qquad
\text{for all }u\in X_{\sigma^{n+N(\varepsilon)}\uomega}.
\]
Integrating over $\nu_{\sigma^{n+N(\varepsilon)}\uomega}$ gives
\[
1
=
\nu_{\sigma^{n+N(\varepsilon)}\uomega}(X_{\sigma^{n+N(\varepsilon)}\uomega})
\le
\int L_{\sigma^n\uomega}^{N(\varepsilon)}\one_{B(z,\varepsilon)}
\,d\nu_{\sigma^{n+N(\varepsilon)}\uomega}.
\]
Using the eigenmeasure relation for the word starting at $\sigma^n\uomega$, we obtain
\begin{equation}\tag{5}
\nu_{\sigma^n\uomega}(B(z,\varepsilon))
\ge
\bigl(\lambda_{\sigma^n\uomega}^{N(\varepsilon)}\bigr)^{-1}.
\end{equation}

Now take $z=T_\uomega^n x$. Combining \((3)\) and \((5)\) , we get
\[
\bigl(\lambda_{\sigma^n\uomega}^{N(\varepsilon)}\bigr)^{-1} \leq \nu_{\sigma^n\uomega}(B(T_\uomega^n x,\varepsilon))
=\frac{\nu_\uomega(B_n(x,\varepsilon,\uomega))}{e^{-\log\lambda_\uomega^n}}
\le 1.
\]
Therefore, if we let $L(\sigma^n\uomega,\varepsilon)
:= \bigl(\lambda_{\sigma^n\uomega}^{N(\varepsilon)}\bigr)^{-1},$
then
\[
0<L(\sigma^n\uomega,\varepsilon)
\le
\frac{\nu_\uomega(B_n(x,\varepsilon,\uomega))}{e^{-\log\lambda_\uomega^n}}
\le 1.
\]

To complete the proof, we show that $L(\sigma^n\uomega,\eps)$ is uniformly bounded away from zero (depending only on $\eps$). 

Indeed, by \ref{ass:degree}, there
exists $D\in\mathbb N$ such that $L_\uomega \mathbf 1(y)=\#T_\uomega^{-1}(y)\le D$
for every $\uomega\in\Sigma$ and every $y\in X_{\sigma\uomega}$. 
Using
$L_\uomega^*\nu_{\sigma\uomega}=\lambda_\uomega\nu_\uomega$
and evaluating both sides on the constant function $\mathbf 1$, we get
\[
\lambda_\uomega
=
\int L_\uomega\mathbf 1\,d\nu_{\sigma\uomega}
\le D.
\]
Therefore, every factor
satisfies $\lambda_{\sigma^{n+j}\uomega}^{-1}\ge D^{-1}$. Hence,

\[
\left(\lambda_{\sigma^n\uomega}^{N(\varepsilon)}\right)^{-1}
=\prod_{j=0}^{N(\varepsilon)-1}
\lambda_{\sigma^{n+j}\uomega}^{-1}
\ge
D^{-N(\varepsilon)}.
\]

Then we have for every $n\in\mathbb N$ and every $\uomega\in\Sigma$,
\[
L(\sigma^n\uomega,\varepsilon)=\left(\lambda_{\sigma^n\uomega}^{N(\varepsilon)}\right)^{-1}\ge 
D^{-N(\varepsilon)}=:\ell_\varepsilon>0.\qedhere
\]
\end{proof}

\section{Proof of Theorem \ref{thmA}}
\label{sec:proof}

In this section, we prove our main result. This is done in the following two propositions.

\begin{proposition}
\label{thm: right side of theorem A}
    Let $(X,\Phi)$ be an IFS of expanding type. Let $\mathbb{P}$ be an ergodic invariant probability measure on $\Sigma$ and $(\Sigma\times X,T)$ the induced expanding skew product. Let $\{\lambda_{\uomega}\}_{{\uomega} \in \Sigma}$ be the eigenvalues for the fiberwise transfer operator from Theorem \ref{thm:nonstationary-rpf}. Then
    $$\sup\big\{h_m(T)\colon\ m\in\MM(\Sigma \times X, T) \text{ with } m\circ\pi^{-1}=\mathbbm{P}\big\} \geq\int \log\lambda_{\uomega}\,d\mathbbm{P}(\uomega).$$
\end{proposition}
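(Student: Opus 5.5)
The plan is to exhibit one measure in the admissible class whose entropy is at least $\int\log\lambda_\uomega\,d\mathbb P$. The natural candidate is the fibered measure built from Theorem~\ref{thm:nonstationary-rpf}:
\[
m:=\int_\Sigma \mu_\uomega\,d\mathbb P(\uomega),\qquad \mu_\uomega=h_\uomega\nu_\uomega .
\]
First I would check that $m$ is admissible. Its base marginal is $\pi_*m=\mathbb P$ by construction. Part (3) of Theorem~\ref{thm:nonstationary-rpf} gives the pseudo-invariance $(T_{\omega_0})_*\mu_\uomega=\mu_{\sigma\uomega}$, and by the equivalence recalled in Section~\ref{sec:metricEntropy} this is exactly $T$-invariance of $m$. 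Measurability of $\uomega\mapsto\mu_\uomega$ is needed for $m$ to be well defined. I would take it from the construction in \cite{CH25}: $\nu_\uomega$ is a limit of normalized $(L^n_{\sigma^{-n}\uomega})^*$ images, hence measurable in $\uomega$. I would state this as part of the quoted theorem rather than reprove it.

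Next comes the entropy lower bound via Proposition~\ref{prop:randBrinKatok}. Before applying it I must check $h_m(T)<\infty$. This follows from the upper bound $h_m(T)\le h_{\mathrm{top}}(T)$, since the covering numbers satisfy $N(\uomega,n,\eps)\le C_\eps D^n$ by assumption \ref{ass:degree} together with Lemma~\ref{lem:mapping}. Alternatively, one can bound the partition entropies directly, since $\xi^n_\uomega$ has at most $|\xi|\cdot(\text{const}\cdot D)^n$ effective atoms.

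Now fix $\eps\in(0,\delta]$. By \eqref{itm:eigenfunctionbound} and Theorem~\ref{thm:wGibbs},
\[
\mu_\uomega(B_n(x,\eps,\uomega))\le C\,\nu_\uomega(B_n(x,\eps,\uomega))\le C\,e^{-S_n\log\lambda(\uomega)} .
\]
Hence, uniformly in $x$,
\[
-\frac1n\log\mu_\uomega(B_n(x,\eps,\uomega))\ge \frac1n S_n\log\lambda(\uomega)-\frac{\log C}{n}.
\]
For the Birkhoff step I need $\log\lambda_\uomega\in L^1(\mathbb P)$ and measurable. From the proof of Theorem~\ref{thm:wGibbs} we have $\lambda_\uomega\le D$. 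For the lower bound, $\lambda_\uomega=\int L_\uomega\one\,d\nu_{\sigma\uomega}\ge1$, because each $T_j$ is surjective, so $L_\uomega\one\ge1$. Thus $0\le\log\lambda_\uomega\le\log D$. Birkhoff's ergodic theorem for the ergodic $\mathbb P$ then gives $\frac1nS_n\log\lambda(\uomega)\to\int\log\lambda\,d\mathbb P$ for $\mathbb P$-a.e. $\uomega$.

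Taking $\limsup_n$ and then $\eps\to0$, Proposition~\ref{prop:randBrinKatok} yields $h_m(T)\ge\int\log\lambda_\uomega\,d\mathbb P$ on a full-measure set. The supremum is therefore at least this value.

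I expect the main obstacle to be justifying the hypotheses rather than the estimate itself. Specifically, one needs the measurable dependence of $\lambda_\uomega,\nu_\uomega,h_\uomega$ on $\uomega$ (so that $m$ and $S_n\log\lambda$ make sense) and the finiteness of $h_m(T)$ needed to invoke Zhu's formula. I would handle measurability by appealing to the limit construction of \cite{CH25}, and finiteness by the preimage-counting bound $\log D$ per step.
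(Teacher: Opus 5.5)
Your proposal is correct and follows the paper's proof. It uses the same measure $\int_\Sigma h_\uomega\nu_\uomega\,d\mathbb P(\uomega)$, the upper weak-Gibbs bound transferred through $h_\uomega\le C$, Birkhoff for $\frac1n S_n\log\lambda(\uomega)$, and Zhu's Brin--Katok formula. Your extra checks ($1\le\lambda_\uomega\le D$, $h_m(T)\le h_{\mathrm{top}}(T)\le\log D<\infty$, measurability via \cite{CH25}) supply hypotheses the paper uses without comment.

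Discard your alternative atom-count bound. For an arbitrary finite measurable partition, the number of positive-measure atoms of $\xi^n_\uomega$ can grow like $|\xi|^n$, for example when $\xi$ generates a Bernoulli factor of small entropy on many symbols. Finiteness should therefore rest on $h_m(T)\le h_{\mathrm{top}}(T)$, as in your main argument.
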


\begin{proof}
    Let $\mu=\int_\Sigma  \mu_{\uomega}\,
        d\mathbbm{P}(\uomega)$ where $\mu_{\uomega}=h_{\uomega}\nu_{\uomega}$ is as in Theorem \ref{thm:nonstationary-rpf}. The pseudo-invariance of $\{\mu_\uomega\}$ given by Theorem \ref{thm:nonstationary-rpf} makes $\mu$ T-invariant. 
    We also know that there exist constants
    $0<c<C<\infty$ such that for every measurable set
    $A\subset X_{\uomega}$,
    \[
        c\nu_{\uomega}(A)
        \leq
        \mu_{\uomega}(A)
        =
        \int_A h_{\uomega}\,d\nu_{\uomega}
        \leq
        C\nu_{\uomega}(A).
    \]
    Thus, we will verify $m$ inherits a weak-Gibbs property from $\nu_\uomega$.

    We estimate $\mu_{\uomega}(B_n(x,\eps,\uomega))$. By the upper bounds in Theorem \ref{thm:nonstationary-rpf} and Theorem \ref{thm:wGibbs}, for $\eps>0$
    sufficiently small,
    \[
        \mu_{\uomega}(B_n(x,\eps,\uomega))
        \leq
        C\nu_{\uomega}(B_n(x,\eps,\uomega))
        \leq
        C e^{-S_n\log\lambda(\uomega)}.
    \]
    Therefore,
    \[
        -\frac1n\log \mu_{\uomega}(B_n(x,\eps,\uomega))
        \geq
        \frac1n S_n\log\lambda(\uomega)-\frac{\log C}{n}.
    \]
    Taking the $\limsup$ on both sides, we get for $\mathbbm{P}$-a.e. $\uomega$ and $\mu_{\uomega}$-a.e. $x$,
    \[
        \limsup_{n\to\infty}
        -\frac1n\log \mu_{\uomega}(B_n(x,\eps,\uomega))
        \geq
        \int_\Sigma \log\lambda_{\uomega}\,d\mathbbm{P}(\uomega).
    \]
    Taking $\eps\to0$ and applying Proposition \ref{prop:randBrinKatok},
    we obtain
    \[
        h_\mu(T)
        \geq
        \int_\Sigma \log\lambda_{\uomega}\,d\mathbbm{P}(\uomega).
    \]
    Hence,
    \[
        \sup\big\{h_m(T)\colon\ m\in\MM(\Sigma \times X, T) \text{ with } m\circ\pi^{-1}=\mathbbm{P}\big\}
        \geq
        \int_\Sigma \log\lambda_{\uomega}\,d\mathbbm{P}(\uomega).
    \]
\end{proof}

\begin{proposition}\label{prop:top-upper}
Let $(X,\Phi)$ be an expanding IFS with induced skew product
$(\Sigma\times X,T)$ and ergodic base measure $\mathbb P$. Then
\[
  h_{\mathrm{top}}(T)\le
  \int_\Sigma \log\lambda_{\uomega}\,d\mathbb P(\uomega).
\]
\end{proposition}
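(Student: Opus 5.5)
Fix a small scale $\eps\in(0,\delta]$ and a word $\uomega$, and bound the maximal separated set $s(\uomega,n,\eps)$ using the weak-Gibbs lower bound of Theorem~\ref{thm:wGibbs}. Let $E\subset X_\uomega$ be a maximal $(\uomega,n,\eps)$-separated set. For distinct $x,y\in E$ we have $d^n_\uomega(x,y)>\eps$. Hence the Bowen balls $B_n(x,\eps/2,\uomega)$, $x\in E$, are pairwise disjoint: a common point would force $d^n_\uomega(x,y)<\eps$ by the triangle inequality applied at each time $0\le k<n$.

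Since $\eps/2\in(0,\delta]$, Theorem~\ref{thm:wGibbs} applies at scale $\eps/2$. It gives $\nu_\uomega(B_n(x,\eps/2,\uomega))\ge \ell_{\eps/2}\,e^{-S_n\log\lambda(\uomega)}$ for every $x$, with $\ell_{\eps/2}$ independent of $n$, $\uomega$ and $x$. Summing over the disjoint balls and using that $\nu_\uomega$ is a probability measure yields
\[
  1\ \ge\ \sum_{x\in E}\nu_\uomega\bigl(B_n(x,\eps/2,\uomega)\bigr)\ \ge\ s(\uomega,n,\eps)\,\ell_{\eps/2}\,e^{-S_n\log\lambda(\uomega)},
\]
so that
\[
  \log s(\uomega,n,\eps)\le S_n\log\lambda(\uomega)-\log\ell_{\eps/2}
\]
for every $\uomega\in\Sigma$ and every $n\ge1$.

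Next, integrate this bound against $\mathbb P$ and use $\sigma$-invariance: $\int S_n\log\lambda\,d\mathbb P=n\int\log\lambda_\uomega\,d\mathbb P$. For this I need $\log\lambda$ to be measurable and integrable. Integrability follows from $\lambda_\uomega\le D$, shown in the proof of Theorem~\ref{thm:wGibbs}, together with a lower bound $\lambda_\uomega\ge 1$. The lower bound holds because $T_{\omega_0}$ is surjective, so $L_\uomega\one\ge1$ and $\lambda_\uomega=\int L_\uomega\one\,d\nu_{\sigma\uomega}\ge1$. Thus $0\le\log\lambda\le\log D$. Measurability of $\uomega\mapsto\lambda_\uomega$ I take from the construction in \cite{CH25}; this is the one point I would double-check. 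Dividing by $n$ and letting $n\to\infty$ gives
\[
  \limsup_{n\to\infty}\frac1n\int_\Sigma\log s(\uomega,n,\eps)\,d\mathbb P\le\int_\Sigma\log\lambda_\uomega\,d\mathbb P,
\]
uniformly in small $\eps$. Letting $\eps\to0$ and invoking the definition of $h_{\mathrm{top}}(T)$ finishes the proof.

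Alternatively, one can work pointwise. Apply Birkhoff's theorem to $\frac1n S_n\log\lambda$ for $\mathbb P$-a.e.\ $\uomega$, using ergodicity, and then apply Proposition~\ref{prop:fiber-entropy}. The main obstacle is only the scale bookkeeping: the disjoint balls must be taken at radius $\eps/2\le\delta$ so that the weak-Gibbs constant applies, and the constant $\ell_{\eps/2}$ may blow up as $\eps\to0$. This is harmless, since the $\eps$-limit is taken only after the $n$-limit, which removes the term $\frac1n\log\ell_{\eps/2}$.
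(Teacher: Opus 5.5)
Your proposal is correct and follows the paper's proof essentially line for line. Both arguments take disjoint Bowen balls of radius $\eps/2$ around a maximal $(\uomega,n,\eps)$-separated set, apply the weak-Gibbs lower bound at scale $\eps/2$, integrate using $\sigma$-invariance, and let $n\to\infty$ before $\eps\to0$. Your extra check that $0\le\log\lambda_\uomega\le\log D$, via surjectivity and \ref{ass:degree}, supplies the integrability of $\log\lambda$ that the paper uses without comment.
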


\begin{proof}
Fix a small $\eps>0$. For $\mathbb P$-a.e.\ $\uomega$, let
$E\subset X_{\uomega}$ be a maximal $(\uomega,n,\eps)$-separated set, so
$\#E=s(\uomega,n,\eps)$. Since $x\neq y\in E$ implies $d_n^{\uomega}(x,y)>\eps$,
the Bowen balls $\{B_n(x,\eps/2,\uomega)\}_{x\in E}$ are pairwise
disjoint. By the lower bound of Theorem~\ref{thm:wGibbs} (at radius $\eps/2$),
\[
  \nu_{\uomega}\bigl(B_n(x,{\eps}/{2},\uomega)\bigr)
  \ge \ell_{\eps/2}\,e^{-S_n\log\lambda(\uomega)} .
\]
Summing over these disjoint balls and using $\nu_{\uomega}(X_{\uomega})=1$, we see
\begin{align*}
    1\geq \nu_{\uomega}\bigl(\bigcup_{x\in E} B_n(x,{\eps}/{2},\uomega)\bigr)
    &= \sum_{x\in E} \nu_{\uomega}\bigl(B_n(x,{\eps}/{2},\uomega)\bigr)\\
    & \geq \sum_{x\in E} \ell_{\eps/2}\,e^{-S_n\log\lambda(\uomega)} =s(\uomega,n,\eps) \ell_{\eps/2}\,e^{-S_n\log\lambda(\uomega)}.
\end{align*}
Hence, we have $s(\uomega,n,\eps)\le \ell_{\eps/2}^{-1}\,e^{S_n\log\lambda(\uomega)}$ so
\begin{align}
    \label{eqn:inequality}
    \frac1n\int \log s(\uomega,n,\eps)d\mathbb P(\uomega)
  \le \frac1n \int S_n\log\lambda(\uomega)d\mathbb P(\uomega)-\frac1n\log\ell_{\eps/2}.
\end{align}
Since $\mathbb P$ is $\sigma$-invariant, we have
$\int_\Sigma \log\lambda_{\sigma^k\uomega}\,d\mathbb P(\uomega)
=\int_\Sigma \log\lambda_{\uomega}\,d\mathbb P(\uomega)$ for every $k\ge0$.
Averaging over $0\le k<n$ gives
\[
\frac1n\int_\Sigma S_n\log\lambda(\uomega)\,d\mathbb P(\uomega)
=\int_\Sigma \log\lambda_{\uomega}\,d\mathbb P(\uomega)
\qquad\text{for every }n\ge1,
\]
while $\frac1n\log\ell_{\eps/2}\to0$ as $n\to\infty$. Taking
$\limsup_{n\to\infty}$ in \eqref{eqn:inequality} therefore yields
\[
  \limsup_{n\to\infty}\frac1n\int\log s(\uomega,n,\eps)\, d\mathbb P(\uomega)
  \le \int_\Sigma \log\lambda_{\uomega}\,d\mathbb P(\uomega).
\]
Letting $\eps\to0$ gives
$h_{\mathrm{top}}(T)\le\int_\Sigma\log\lambda_{\uomega}\,d\mathbb P(\uomega)$.
\end{proof}

Combining Propositions
\ref{thm: right side of theorem A} and \ref{prop:top-upper} gives
\[
    \int_\Sigma \log\lambda_{\uomega}\,d\mathbbm{P}(\uomega)
    \;\le\;
    \sup\big\{h_m(T)\colon\ m\in\MM_\mathbb{P}(\Sigma \times X, T)\big\}
    \;\le\;
    h_{\mathrm{top}}(T)
    \;\le\;
    \int_\Sigma \log\lambda_{\uomega}\,d\mathbbm{P}(\uomega),
\]
where 
the second inequality holds by definition of topological pressure. Since the two ends coincide, all three
inequalities are equalities, which proves Theorem \ref{thmA}.

\bibliographystyle{amsalpha}
\bibliography{Nonstationary}

\end{document}